\DeclareMathSymbol{\rightrightarrows}  {\mathrel}{AMSa}{"13}
\def\Ob{\operatorname{Ob}}
\def\Mor{\operatorname{Mor}}
\def\St{\operatorname{St}}
\def\Tot{\operatorname{Tot}}
\def\sk{\operatorname{sk}}
\def\Ext{\operatorname{Ext}}
\def\pb{\operatorname{pb}}
\def\Tr{\operatorname{Tr}}
\def\varholim@#1#2{\mathop{\vtop{\ialign{##\crcr
 \hfil$#1\m@th\operator@font holim$\hfil\crcr
 \noalign{\nointerlineskip\kern\ex@}#2#1\crcr
 \noalign{\nointerlineskip\kern-\ex@}\crcr}}}}
\def\hocolim{\mathpalette\varholim@\rightarrowfill@} 
\def\hoinvlim{\mathpalette\varholim@\leftarrowfill@}
\newtheorem{theorem}{Theorem}
\newtheorem{lemma}[theorem]{Lemma}
\newtheorem{corollary}[theorem]{Corollary}
\theoremstyle{definition}
\newtheorem{example}[theorem]{Example}
\newtheorem{remark}[theorem]{Remark}
\begin{document}

\title{Cosimplicial spaces and cocycles}
                                                                                \author{J.F. Jardine}
\thanks{This research was supported by the Natural Sciences and Engineering Research Council of Canada}
\address{Department of Mathematics\\University of Western Ontario\\
London, Ontario N6A 5B7 \\
Canada}
 
\email{jardine@uwo.ca}
 
\subjclass[2010]{Primary 55U35; Secondary 18G50, 14A20}
\keywords{cosimplicial spaces, non-abelian cohomology, cocycles}



\begin{abstract}
\noindent
Standard results from non-abelian cohomology theory specialize to a theory of torsors and stacks for cosimplicial groupoids. The space of global sections of the stack completion of a cosimplicial groupoid $G$ is weakly equivalent to the Bousfield-Kan total complex of $BG$ for all cosimplicial groupoids $G$. The $k$-invariants for the Postnikov tower of a cosimplicial space $X$ are naturally elements of stack cohomology for the stack associated to the fundamental groupoid $\pi(X)$ of $X$. 
Cocycle-theoretic ideas and techniques are used throughout the paper.
\end{abstract}

\maketitle

\section*{Introduction}

This paper is an exposition of the basic homotopy theory of cosimplicial
spaces, from a point of view that is informed by sheaf
theoretic homotopy theory.

This discussion interpolates ideas associated with the
injective model structure for cosimplicial spaces with classical
results of Bousfield and Kan. The injective model structure for
cosimplicial spaces is a special case of the injective model structure
for all small diagrams of simplicial sets $I \to s\mathbf{Set}$ which
are defined on a fixed index category $I$, and this in turn is a
special case of the injective model structure for simplicial
sheaves (and presheaves) on a small Grothendieck site.

We effectively lose nothing by working within the injective structure
for cosimplicial spaces, as it has the same weak equivalences as the
Bousfield-Kan model structure.  At the same time, interesting
phenomena arise from the injective structure which correspond to well
studied features of the homotopy theory of simplicial sheaves.

In particular, the injective model structure 
creates an attractive theory of torsors and stacks for cosimplicial
groupoids, which is displayed in the second section of this paper. As
in local homotopy theory, the category of cosimplicial groupoids has a
model structure which is induced from the injective structure on
cosimplicial spaces, for which the fibrant object associated to a
cosimplicial groupoid $H$ is its stack completion $\St(H)$. The stack
completion may be described in global sections by torsors, suitably
defined, and the link between torsors and stacks is achieved by using
cocycles. The use of cocycle theory is a recurring theme of
this paper.

There is a minor surprise: while the cosimplicial space $BG$
associated to a cosimplicial groupoid $G$ might not be Bousfield-Kan
fibrant, any weak equivalence $G \to H$ induces a weak equivalence of
the associated Bousfield-Kan total complexes. This is a consequence of
Theorem \ref{th 12} (or Corollary \ref{cor 13}) below, which says that
the total complex $\Tot(BG)$ and the classifying space
$B(G-\mathbf{tors})$ of the groupoid of $G$-torsors
have the same homotopy type.

The set of isomorphism classes of $G$-torsors, or equivalently the set
of path components of the groupoid $G-\mathbf{tors}$, coincides with
the set of morphisms $[\ast,BG]$ in the homotopy category of
cosimplicial spaces, just as in sheaf theory.

The rest of the paper (especially Section 4) is taken up with an
analysis of Postnikov towers and $k$-invariants. 

The Postnikov tower of a cosimplicial space $X$ is used to construct
an analog of the cohomological descent spectral sequence for the
homotopy inverse limit of $X$, as one would expect \cite{GECT}, modulo the catch
that $X$ has to have a non-trivial cocycle for this approach to say
anything at all. In fact, one can prove easily (Lemma \ref{lem 6})
that $X$ has a non-empty homotopy inverse limit if and only if there
is a cocycle
\begin{equation*}
\ast \xleftarrow{\simeq} U \to X.
\end{equation*}
There are injective fibrant cosimplicial spaces which do not have cocycles 
--- see Example \ref{ex 5}.

To analyze the Postnikov tower of a cosimplicial space $X$ away from
the cocycles of $X$, a different method is required.

The $k$-invariant of the standard fibration $P_{n}Y \to P_{n-1}Y$ for
a simply connected Kan complex $Y$ can be described as the composite
\begin{equation*}
P_{n-1}Y \to P_{n-1}Y/P_{n}Y \to P_{n+1}(P_{n-1}Y/P_{n}Y) =: Z_{n}Y,
\end{equation*}
for $n \geq 2$, and $Z_{n}Y$ has a functorial base point. 
It follows
(Lemma \ref{lem 23}) that, for any diagram of simply connected Kan
complexes $X$, there is a weak equivalence of diagrams
\begin{equation*}
Z_{n}X \simeq K(H_{n}(Z_{n}X),n+1).
\end{equation*} 
The resulting fibre homotopy sequence
\begin{equation*}
P_{n}X \to P_{n-1}X \to Z_{n}X
\end{equation*}
specializes to a fibre sequence of diagrams which is indexed by the
stack completion of the fundamental groupoid
$\pi(X)$. The homotopy cartesian square which is given by Theorem
\ref{th 26} is the result of applying a homotopy colimit functor to
that sequence. From this point of view, the $k$-invariants of $X$
are elements of stack cohomology groups that are associated to the fundamental
groupoid $\pi(X)$.

There is nothing special about cosimplicial spaces in the
$k$-invariant construction --- that same construction applies to all
diagram-theoretic homotopy types.
\medskip

I would like to thank the referee for a series of helpful
comments, which led to a substantial sharpening of the
exposition of this paper.

\vfill\eject

\tableofcontents

\section{The injective model structure}

Suppose that $\mathbf{\Delta}$ is the category of finite ordinal
numbers $\mathbf{n} = \{ 0, 1, \dots ,n\}$, $n \geq 0$, and all
order-preserving functions between them.

Write $s\mathbf{Set}^{\mathbf{\Delta}}$ for the category of
cosimplicial spaces, meaning functors 
\begin{equation*}
X: \mathbf{\Delta} \to
s\mathbf{Set} 
\end{equation*}
taking values in simplicial sets, and their natural
transformations. It is standard practice to write $X^{n} =
X(\mathbf{n})$ for $\mathbf{n} \in \mathbf{\Delta}$.

The injective model structure on the category of cosimplicial spaces
has weak equivalences and cofibrations defined {\it
  sectionwise}\footnote{The term ``sectionwise'' is commonly used in
  Algebraic Geometry. Homotopy theorists more often use 
  ``objectwise'' or ``pointwise'' to describe the same concept.}: a
map $X \to Y$ is a {\it weak equivalence} (respectively {\it
  cofibration}) if and only if all maps $X^{n} \to Y^{n}$ are weak
equivalences (respectively cofibrations) of simplicial sets. The {\it
  injective fibrations} are defined by a right lifting property with
respect to trivial cofibrations.

The weak equivalences coincide with the weak equivalences of the
Bousfield-Kan model structure on cosimplicial spaces \cite{BK}. A {\it
  Bousfield-Kan cofibration} is a sectionwise cofibration which
induces an isomorphism in maximal augmentations. Explicitly, the
maximal augmentation $X^{-1}$ of a cosimplicial set is the simplicial
set which is defined by the equalizer diagram
\begin{equation*}
\xymatrix@C=12pt{
X^{-1} \ar[r] & X^{0} \ar@<1ex>[r]^{d^{0}} \ar@<-1ex>[r]_{d^{1}} & X^{1}
}
\end{equation*}
Thus, a Bousfield-Kan cofibration is a cofibration $A \to B$ as
defined above, such that the map $A^{-1} \to B^{-1}$ is an
isomorphism. 

It follows that every injective fibration is
a Bousfield-Kan fibration.

We also have the following:

\begin{lemma}\label{lem 1}
There is a natural isomorphism
\begin{equation*}
\varprojlim_{n}\ X^{n} \xrightarrow{\cong} X^{-1}
\end{equation*}
for cosimplicial sets (hence for cosimplicial spaces) $X$.
\end{lemma}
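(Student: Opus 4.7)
The plan is to verify the universal property of the limit directly, exploiting the fact that $\mathbf{0}$ is a weakly initial object of $\mathbf{\Delta}$ (every $\mathbf{n}$ admits a morphism from $\mathbf{0}$, though not uniquely).

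First I would show that any cone $\{f_n: L \to X^n\}$ over $X$ is determined by $f_0$ and moreover that $f_0$ factors through $X^{-1}$. Given such a cone, for each $n$ pick any morphism $\phi: \mathbf{0} \to \mathbf{n}$ in $\mathbf{\Delta}$; compatibility of the cone forces $f_n = \phi_\ast f_0$, so indeed $f_0$ determines all other components. The existence of the two distinct coface maps $d^0, d^1: \mathbf{0} \to \mathbf{1}$ then forces $d^0_\ast f_0 = d^1_\ast f_0$, so $f_0$ factors uniquely through the equalizer $X^{-1} \hookrightarrow X^0$. This yields a natural map $\varprojlim_n X^n \to X^{-1}$.

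To construct the inverse, start from $g: L \to X^{-1}$, and set $f_n := \phi_\ast g$ for any choice of $\phi: \mathbf{0} \to \mathbf{n}$. The main step (which is where the equalizer condition really enters) is to verify that this is independent of the choice of $\phi$. Two such morphisms select two elements $i \le j$ of $\mathbf{n}$, and by induction on $j - i$ it suffices to treat the case $j = i+1$. Here I would factor both maps through the morphism $\alpha: \mathbf{1} \to \mathbf{n}$ given by $0 \mapsto i$, $1 \mapsto i+1$, writing the two maps as $\alpha d^1$ and $\alpha d^0$ respectively, so that the equalizer relation $d^0_\ast g = d^1_\ast g$ implies the equality after applying $\alpha_\ast$.

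Compatibility of the resulting family should then be immediate: for any $\beta: \mathbf{m} \to \mathbf{n}$ and any $\phi: \mathbf{0} \to \mathbf{m}$, the composite $\beta\phi$ is a valid choice of morphism $\mathbf{0} \to \mathbf{n}$, so $\beta_\ast f_m = \beta_\ast \phi_\ast g = (\beta\phi)_\ast g = f_n$. The two constructions are manifestly inverse, giving the natural isomorphism. The only genuine obstacle is the well-definedness check in the inverse construction, and it is handled by the one-step reduction above.
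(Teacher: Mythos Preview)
Your argument is correct. The paper does not actually give a proof of this lemma; it simply states that ``the proof of Lemma~\ref{lem 1} is elementary'' and moves on. Your write-up is a perfectly good way to supply the omitted details: the reduction to $f_{0}$, the equalizer factorization, and the well-definedness check via the one-step induction through $\alpha:\mathbf{1}\to\mathbf{n}$ are exactly the points that need to be verified, and you handle them cleanly.
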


\noindent
The proof of Lemma \ref{lem 1} is elementary.

The simplicial set $\Tot(Y)$ is usually defined \cite{BK} for a Bousfield-Kan
fibrant cosimplicial space $Y$ by
\begin{equation*}
\Tot(Y) = \mathbf{hom}(\Delta,Y),
\end{equation*}
where $\Delta$ is the cosimplicial space $\mathbf{n} \mapsto
\Delta^{n}$ and $\mathbf{hom}(\Delta,Y)$ is the usual diagram
theoretic function complex.

In general, for cosimplicial spaces $X$ and $Y$, the function complex
$\mathbf{hom}(X,Y)$ is the simplicial set whose $n$-simplices are the
cosimplicial space maps 
\begin{equation*}
X \times \Delta^{n} \to Y.
\end{equation*}
This function complex construction defines a closed simplicial model
structure for both the injective and Bousfield-Kan model structure on
cosimplicial spaces.

The notation $\ast$ is used for the terminal object in cosimplicial
spaces --- it is the constant diagram on the one-point simplicial
set. The cosimplicial space $\Delta$ is cofibrant for the
Bousfield-Kan model structure, and it is a ``fat point'' in the sense
that the canonical map $\Delta \to \ast$ is a weak equivalence.
\medskip

It is now standard to say (see \cite{J40}, for example) that the
homotopy inverse limit $\hoinvlim_{\mathbf{n}}\ X^{n}$ of a
cosimplicial space $X$ is defined by taking an injective fibrant model
$j: X \to Z$ (a weak equivalence with $Z$ injective fibrant), and then
setting
\begin{equation*}
\hoinvlim_{\mathbf{n}}\ X^{n} = \varprojlim{}_{\mathbf{n}} Z^{n} = 
\mathbf{hom}(\ast,Z).
\end{equation*}
In this sense, the homotopy inverse limit is a derived inverse limit.

The injective  model structure  on cosimplicial spaces  is cofibrantly
generated, so that one can make  a natural choice of injective fibrant
model.  The  homotopy inverse  limit  construction  just described  is
therefore functorial in cosimplicial spaces $X$.

\begin{lemma}
There is a natural weak equivalence
\begin{equation*}
\Tot(Y) \simeq \hoinvlim_{\mathbf{n}}\ Y^{n}
\end{equation*}
for Bousfield-Kan fibrant objects $Y$.
\end{lemma}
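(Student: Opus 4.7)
The plan is to play the two model structures (injective and Bousfield–Kan) off against each other: they share the same class of weak equivalences but have different cofibrations and fibrations, so I can use whichever structure gives the cofibrancy/fibrancy hypotheses needed for the SM7 axiom in any given step.

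First I would fix a functorial injective fibrant model $j : Y \to Z$. By Lemma \ref{lem 1} and the definition of the homotopy inverse limit,
\begin{equation*}
\hoinvlim{}_{\mathbf{n}}\, Y^{n} \;=\; \varprojlim{}_{\mathbf{n}}\, Z^{n} \;=\; \mathbf{hom}(\ast, Z).
\end{equation*}
On the other side, $\Tot(Y) = \mathbf{hom}(\Delta, Y)$, so I must produce a natural weak equivalence $\mathbf{hom}(\Delta, Y) \simeq \mathbf{hom}(\ast, Z)$.

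I would establish this via the obvious zig-zag through $\mathbf{hom}(\Delta, Z)$. The map $\mathbf{hom}(\Delta, Y) \to \mathbf{hom}(\Delta, Z)$ induced by $j$ is a weak equivalence: $\Delta$ is Bousfield–Kan cofibrant, $Y$ is Bousfield–Kan fibrant by hypothesis, and $Z$ is also Bousfield–Kan fibrant because every injective fibration is a Bousfield–Kan fibration (noted in the excerpt); so SM7 (or Ken Brown's lemma) in the Bousfield–Kan structure applies. The map $\mathbf{hom}(\ast, Z) \to \mathbf{hom}(\Delta, Z)$ induced by $\Delta \to \ast$ is a weak equivalence for a dual reason: in the injective structure every object is cofibrant, so both $\Delta$ and $\ast$ are injective cofibrant, and $Z$ is injective fibrant, so SM7 in the injective structure sends the weak equivalence $\Delta \to \ast$ to a weak equivalence after applying $\mathbf{hom}(-,Z)$.

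Stringing these together gives the required natural weak equivalence
\begin{equation*}
\Tot(Y) \;=\; \mathbf{hom}(\Delta, Y) \;\xrightarrow{\simeq}\; \mathbf{hom}(\Delta, Z) \;\xleftarrow{\simeq}\; \mathbf{hom}(\ast, Z) \;=\; \hoinvlim{}_{\mathbf{n}}\, Y^{n}.
\end{equation*}
Naturality in $Y$ is immediate from the functoriality of the injective fibrant replacement (which is available because the injective structure is cofibrantly generated, as remarked in the excerpt). There is no real obstacle here; the only conceptual point to watch is that the two applications of SM7 are in different model structures, and this is legitimate precisely because both structures have the same weak equivalences and $\mathbf{hom}(-,-)$ is a closed simplicial function complex for each.
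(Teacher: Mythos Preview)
Your proposal is correct and follows essentially the same route as the paper: both argue via the zig-zag $\mathbf{hom}(\Delta,Y)\to\mathbf{hom}(\Delta,Z)\leftarrow\mathbf{hom}(\ast,Z)$, using the Bousfield--Kan structure (with $\Delta$ cofibrant and $Y,Z$ fibrant) for the first leg and the injective structure (with $\Delta\to\ast$ a weak equivalence of cofibrants and $Z$ fibrant) for the second. The only cosmetic difference is that you invoke Lemma~\ref{lem 1} for the identification $\varprojlim_{\mathbf{n}}Z^{n}=\mathbf{hom}(\ast,Z)$, whereas this is really just the definition of the function complex; otherwise the arguments match.
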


\begin{proof}
Every injective fibrant cosimplicial space is
Bousfield-Kan fibrant.

The cosimplicial space $\Delta$ is cofibrant for the
Bousfield-Kan structure, so that any weak equivalence $Y \to Y'$ of
Bousfield-Kan fibrant objects induces a weak equivalence
\begin{equation*}
\mathbf{hom}(\Delta,Y) \to \mathbf{hom}(\Delta,Y').
\end{equation*}
Thus, if $j: Y \to Z$ is an injective fibrant model for a
Bousfield-Kan fibrant object $Y$, then the map
\begin{equation*}
\mathbf{hom}(\Delta,Y) \xrightarrow{j_{\ast}} \mathbf{hom}(\Delta,Z)
\end{equation*}
is a weak equivalence. At the same time, the map $\Delta \to \ast$ is
a weak equivalence of cofibrant objects for the injective model
structure on cosimplicial spaces, so that the induced map
\begin{equation*}
\mathbf{hom}(\ast,Z) \to \mathbf{hom}(\Delta,Z)
\end{equation*}
is a weak equivalence since $Z$ is injective fibrant. 
\end{proof}

I shall now write 
\begin{equation*}
\Tot(X) = \hoinvlim_{\mathbf{n}}\ X^{n}
\end{equation*} 
for all cosimplicial spaces $X$.

There are natural identifications
\begin{equation}
\pi_{0}\Tot(X) = [\ast,X]
\end{equation}
and
\begin{equation}\label{eq 2}
\pi_{n}(\Tot(X),x) \cong [S^{n},X]_{\ast}
\end{equation}
with morphisms in the homotopy category (respectively, pointed
homotopy category) of cosimplicial spaces, where $x$ is a cosimplicial
space map $x: \ast \to X$, or a {\it global} base point for $X$. The
pointed simplicial set $S^{n} = (S^{1})^{\wedge n}$ is
identified with a constant cosimplicial space in the formula (\ref{eq 2}).

Here is another elementary statement:

\begin{lemma}\label{lem 3}
Suppose that the cosimplicial space $X$ is a cosimplicial set in the
sense that the simplicial set $X^{n}$ is discrete on a set of vertices for
all $n$. Then $X$ is injective fibrant.
\end{lemma}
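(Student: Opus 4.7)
The plan is to verify directly the right lifting property that defines injective fibrations. Concretely, suppose $i : A \to B$ is a trivial cofibration in the injective model structure (so each $i : A^n \to B^n$ is both a cofibration and a weak equivalence of simplicial sets), and suppose a map $f : A \to X$ is given; I want to produce a lift $B \to X$.

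The key observation is that $\pi_0 : s\mathbf{Set} \to \mathbf{Set}$ is left adjoint to the inclusion of discrete simplicial sets into all simplicial sets. Applied sectionwise, this gives, for any cosimplicial space $W$ and any cosimplicial set $X$ (in the sense of the statement), a natural bijection
\begin{equation*}
\operatorname{Hom}_{s\mathbf{Set}^{\mathbf{\Delta}}}(W,X) \cong \operatorname{Hom}_{\mathbf{Set}^{\mathbf{\Delta}}}(\pi_0 W, X),
\end{equation*}
where $\pi_0 W$ denotes the cosimplicial set $\mathbf{n} \mapsto \pi_0(W^n)$. The compatibility with the cosimplicial structure is automatic from the functoriality of $\pi_0$.

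Now apply this to the lifting problem. Since each $i : A^n \to B^n$ is a weak equivalence of simplicial sets, it induces a bijection $\pi_0(A^n) \to \pi_0(B^n)$; these assemble into a sectionwise, and therefore cosimplicial, isomorphism $\pi_0 i : \pi_0 A \xrightarrow{\cong} \pi_0 B$. The map $f$ corresponds under the adjunction to a map $\bar f : \pi_0 A \to X$ of cosimplicial sets, and the composite
\begin{equation*}
\pi_0 B \xrightarrow{(\pi_0 i)^{-1}} \pi_0 A \xrightarrow{\bar f} X
\end{equation*}
transports back under the adjunction to a map $g : B \to X$ with $g \circ i = f$, as required.

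I do not expect any real obstacle here; the only thing to be careful about is confirming that $\pi_0$, applied sectionwise, really does produce a functor of cosimplicial objects and that the adjunction described above is natural in both variables, so that the constructed $g$ is genuinely a map of cosimplicial spaces rather than just a collection of compatible set maps. This is a direct consequence of the naturality of the $\pi_0$/inclusion adjunction in $s\mathbf{Set}$.
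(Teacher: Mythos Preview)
Your proof is correct and is essentially the paper's own argument: the paper observes that any map $A \to X$ factors uniquely through $\pi_{0}A \to X$ (your adjunction), that $\pi_{0}A \to \pi_{0}B$ is an isomorphism of cosimplicial sets, and solves the lifting problem via the resulting diagram. The only difference is phrasing --- you cast the factorization explicitly as the $\pi_{0}$/inclusion adjunction, while the paper leaves this implicit.
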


\begin{proof}
If the map $i: A \to B$ is a trivial cofibration of cosimplicial sets
then the induced map $\pi_{0}A \to \pi_{0}B$ is an isomorphism of
cosimplicial sets, and any map $A \to X$ factors uniquely through a
map $\pi_{0}A \to X$. Thus, all lifting problems
\begin{equation*}
\xymatrix{
A \ar[r] \ar[d]_{i} & \pi_{0}A \ar[r] \ar[d]^{\cong} & X \\
B \ar[r] & \pi_{0}B \ar@{.>}[ur]
}
\end{equation*}
can be solved.
\end{proof}

\begin{remark}\label{rem 4}
Lemma \ref{lem 3} is a special case of a basic sheaf theoretic fact. If $F$
is a sheaf of sets on a small Grothendieck site $\mathcal{C}$, then
the simplicial sheaf $K(F,0)$ is fibrant for the injective model
structure for simplicial presheaves on $\mathcal{C}$ which is defined
by the topology --- this statement appears, for example, as Lemma 6.10 of
\cite{GECT}, with the same proof.  

Suppose that $I$ is a small category. In the $I$-diagram category
$s\mathbf{Set}^{I}$ in simplicial sets, every presheaf is a sheaf, and
so every $I$-diagram of simplicial sets which is simplicially constant
is injective fibrant.
\end{remark}

\begin{example}\label{ex 5}
There are cosimplicial spaces $X$ for which $\Tot(X) = \emptyset$.
The cosimplicial space $\Delta$ has empty inverse limit, and it follows
that the cosimplicial space $\sk_{0}\Delta$ (vertices of $\Delta^{n}$ for
all $n$) has empty inverse limit. This object is an injective fibrant
cosimplicial space, by Lemma \ref{lem 3}.
\end{example}

A {\it cocycle} $(g,f)$ from $X$ to $Y$ is a diagram in cosimplicial spaces
\begin{equation*}
X \underset{\simeq}{\xleftarrow{g}} V \xrightarrow{f} Y,
\end{equation*}
such that $g$ is a weak equivalence.
A morphism of cocycles is a commutative diagram
\begin{equation*}
\xymatrix@R=8pt{
& V \ar[dl]_{\simeq} \ar[dd] \ar[dr] \\
X && Y \\
& V' \ar[ul]^{\simeq} \ar[ur]
}
\end{equation*}
These are the objects and morphisms of the cocycle category
$h(X,Y)$. It is a basic result \cite{coc-cat} for injective model
structures on diagram categories that the assignment which sends a
cocycle $(g,f)$ to the morphism $fg^{-1}$ in the
homotopy category defines a bijection
\begin{equation*}
\pi_{0}Bh(X,Y) \xrightarrow{\cong} [X,Y]
\end{equation*} 
between path components of the cocycle category $h(X,Y)$ and the set
of morphisms $[X,Y]$ of the homotopy category.

We also have the following:

\begin{lemma}\label{lem 6}
Suppose that $X$ is a cosimplicial space. Then the space $\Tot(X)$ is
non-empty if and only if there is a cocycle
\begin{equation*}
\ast \xleftarrow{\simeq} U \to X.
\end{equation*}
\end{lemma}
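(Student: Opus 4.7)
The plan is to combine the identification of $\Tot(X)$ as a derived mapping space from $\ast$ with the cocycle-theoretic description of morphisms in the homotopy category, both of which are already in hand in the excerpt.

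First I would observe that $\Tot(X)$ is non-empty if and only if $\pi_{0}\Tot(X)$ is non-empty. By the natural identification (1), $\pi_{0}\Tot(X) = [\ast, X]$, so the question reduces to showing that the set $[\ast, X]$ of maps in the homotopy category is non-empty if and only if there exists a cocycle $\ast \xleftarrow{\simeq} U \to X$.

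Next I would invoke the cocycle bijection $\pi_{0}Bh(\ast, X) \xrightarrow{\cong} [\ast, X]$ recalled immediately before the lemma. Non-emptiness of $[\ast, X]$ is the same as non-emptiness of $\pi_{0}Bh(\ast, X)$, which in turn is the same as the existence of at least one object of the cocycle category $h(\ast, X)$, i.e., a diagram $\ast \xleftarrow{\simeq} U \to X$ with the left map a weak equivalence. This gives both directions simultaneously.

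There is essentially no obstacle here: the entire argument is a two-step unwinding of definitions and previously stated results. If anything required care, it would be checking that in the forward direction the vertex of $\Tot(X) = \mathbf{hom}(\ast, Z)$ (for $Z$ an injective fibrant model of $X$) genuinely produces a cocycle into $X$ itself, but this is immediate: a vertex is a map $\ast \to Z$, and composing with the weak equivalence $X \xrightarrow{\simeq} Z$ already provides a cocycle (with $U = \ast$ or $U = X$, according to taste), so no further construction is needed.
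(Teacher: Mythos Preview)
Your main argument is correct: the chain
\[
\Tot(X)\ne\emptyset \;\Longleftrightarrow\; \pi_{0}\Tot(X)\ne\emptyset \;\Longleftrightarrow\; [\ast,X]\ne\emptyset \;\Longleftrightarrow\; \pi_{0}h(\ast,X)\ne\emptyset \;\Longleftrightarrow\; h(\ast,X)\ne\emptyset
\]
works, using identification (1) and the cocycle bijection quoted just before the lemma. This is a genuinely different route from the paper's. The paper does not invoke either of those two displayed facts; instead it argues directly at the level of the injective fibrant model $Z$: a vertex $\ast\to Z$ is trivially a cocycle, and conversely a cocycle $\ast\xleftarrow{\simeq}U\to Z$ is promoted to a vertex by taking a fibrant replacement $U\to U'$ and then a section of the resulting trivial injective fibration $U'\to\ast$. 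Your approach is shorter because it cashes in results already stated; the paper's approach is more self-contained and exhibits the actual vertex.

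One caution: your closing parenthetical is muddled and should be dropped. Given a vertex $\ast\to Z$ and the weak equivalence $j:X\to Z$, neither of your suggested choices produces a cocycle \emph{into $X$}. Taking $U=\ast$ gives only a map $\ast\to Z$, not $\ast\to X$; taking $U=X$ would require $X\to\ast$ to be a weak equivalence, which is absurd in general. The honest way to pass from a cocycle in $h(\ast,Z)$ to one in $h(\ast,X)$ is exactly the path-component argument you already used (both sets of components are identified with $[\ast,X]$), so the parenthetical adds nothing and is incorrect as stated.
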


\begin{proof}
The space $\Tot(X)$ is non-empty if and only if an injective fibrant
model $j: X \to Z$ for $X$ has a vertex $\ast \to Z$. We show that the
object $Z$ has a global vertex $\ast \to Z$ if and only if the cocycle
category $h(\ast,Z)$ is non-empty. The map $j$ is a weak
equivalence, so that the cocycle category $h(\ast,Z)$ is non-empty if
and only if $h(\ast,X)$ is non-empty, since these two categories have
isomorphic sets of path components.

To see that the injective fibrant object $Z$ has a map $\ast \to Z$ if
the cocycle category $h(\ast,Z)$ is non-empty, let 
\begin{equation*}
\ast
\xleftarrow{\simeq} U \xrightarrow{f} Z
\end{equation*} 
be a cocycle, and observe
that there is a commutative diagram
\begin{equation*}
\xymatrix@R=12pt{
U \ar[r]^{f} \ar[d]_{j} & Z \\
U' \ar[ur]_{f'}
}
\end{equation*}
where $j$ is a trivial cofibration and $U'$ is injective fibrant.
The map $U' \to \ast$ is a trivial injective fibration, and
therefore has a section $\ast \to U'$, and there is a map
\begin{equation*}
\ast \to U' \xrightarrow{f'} Z.
\end{equation*} 
\end{proof}

\begin{lemma}
Suppose that the map $p: X \to Y$ is an injective fibration between injective
fibrant cosimplicial spaces. Suppose that $\Tot(Y) \ne \emptyset$ and let
$x \in \Tot(Y)$ be a vertex. Let $F$ be the fibre over $x$, so that
the diagram
\begin{equation*}
\xymatrix{
F \ar[r]^{i} \ar[d] & X \ar[d]^{p} \\
\ast \ar[r]_{x} & Y
}
\end{equation*}
is a pullback in cosimplicial spaces. Then $\Tot(F) \ne \emptyset$ if
and only if there is a cocycle $\ast \xleftarrow{\simeq} U
\xrightarrow{f} X$ such that the composite cocycle
\begin{equation*}
\ast \xleftarrow{\simeq} U \xrightarrow{pf} Y
\end{equation*}
is in the path component of the cocycle $x: \ast \to Y$.
\end{lemma}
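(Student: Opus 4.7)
The plan is to apply Lemma~\ref{lem 6} to the fibre $F$. Since the map $F \to \ast$ is the pullback of the injective fibration $p$ along $x: \ast \to Y$, and injective fibrations are stable under pullback, $F$ is itself injective fibrant. By Lemma~\ref{lem 6}, $\Tot(F) \ne \emptyset$ if and only if the cocycle category $h(\ast, F)$ is non-empty. For the forward direction, given a cocycle $\ast \xleftarrow{q,\simeq} V \xrightarrow{g} F$, composition with $i: F \to X$ yields a cocycle $\ast \xleftarrow{q,\simeq} V \xrightarrow{ig} X$, and the pullback identity $pig = xq$ exhibits $q: V \to \ast$ as a morphism of cocycles from $(V, pig)$ to $(\ast, x)$ in $h(\ast, Y)$, so these two cocycles lie in the same path component.

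For the converse, suppose a cocycle $\ast \xleftarrow{q,\simeq} U \xrightarrow{f} X$ is given with $\ast \xleftarrow{q,\simeq} U \xrightarrow{pf} Y$ in the path component of $x$. Under the cocycle bijection $\pi_{0}Bh(\ast,Y) \xrightarrow{\cong} [\ast,Y]$ of the displayed formula above, this translates to the equality $pf \circ q^{-1} = x$ in $[\ast, Y]$, equivalently $pf = xq$ in $[U, Y]$. Every cosimplicial space is cofibrant in the injective model structure and $Y$ is injective fibrant, so this equality in the homotopy category is realised by a simplicial homotopy $H: U \times \Delta^{1} \to Y$ with $H|_{0} = pf$ and $H|_{1} = xq$. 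The inclusion $U \times \{0\} \hookrightarrow U \times \Delta^{1}$ is a trivial cofibration (SM7 applied to the cofibration $\emptyset \to U$ and the trivial cofibration $\{0\} \hookrightarrow \Delta^1$, or by direct sectionwise inspection), so since $p$ is an injective fibration, $H$ lifts to $\tilde{H}: U \times \Delta^{1} \to X$ with $\tilde{H}|_{0} = f$. The restriction $f' := \tilde{H}|_{1}$ then satisfies $pf' = xq$, so the universal property of the pullback yields a unique map $g: U \to F$ with $ig = f'$. The resulting cocycle $\ast \xleftarrow{q,\simeq} U \xrightarrow{g} F$, together with Lemma~\ref{lem 6}, gives $\Tot(F) \ne \emptyset$.

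The main obstacle is the converse direction, specifically the passage from the abstract cocycle-path-component hypothesis to an actual simplicial homotopy, followed by lifting that homotopy across $p$ to produce a new representative landing inside $F$. The cocycle bijection recalled in the excerpt is what extracts the homotopy, and the simplicial structure on the injective model category, combined with the fibration hypothesis on $p$, is what converts the homotopy into a cocycle into $F$.
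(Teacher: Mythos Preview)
Your proof is correct, but the converse direction takes a genuinely different route from the paper's.

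The paper argues directly at the level of cocycle morphisms: for each cocycle $f_{i}: U_{i} \to Y$ in a zigzag connecting $(U,pf)$ to $(\ast,x)$, it forms the pullback $F_{i} = U_{i} \times_{Y} X$, observes that a morphism $U_{1} \to U_{2}$ of cocycles induces a weak equivalence $F_{1} \to F_{2}$ (right properness, since $p$ is an injective, hence sectionwise, fibration), and concludes that the property ``$F_{i}$ admits a cocycle from $\ast$'' propagates along the zigzag. At the starting end the lift $f$ gives a section $U \to U \times_{Y} X$, and at the terminal end $F_{\ast} = F$, so $F$ has a cocycle.

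You instead pass through the bijection $\pi_{0}h(\ast,Y) \cong [\ast,Y]$ to turn the path-component hypothesis into an equality $pf = xq$ in the homotopy category, realise it by a simplicial homotopy using that $U$ is cofibrant and $Y$ is injective fibrant, and then use the homotopy lifting property of the injective fibration $p$ to replace $f$ by a map that strictly factors through $F$. This is more in the spirit of classical obstruction theory and yields an explicit cocycle into $F$; the paper's argument stays entirely inside the cocycle category and relies only on right properness, never invoking the homotopy-category bijection or the simplicial cylinder. Both are clean; yours uses slightly more of the ambient model-categorical machinery, while the paper's is more self-contained within the cocycle formalism being developed.
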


\begin{proof}
If $\Tot(F) \ne \emptyset$ then there is a cocycle 
\begin{equation*}
\ast \xleftarrow{\simeq} U \xrightarrow{g} F, 
\end{equation*}
and then the diagram
\begin{equation*}
\xymatrix{
U \ar[r]^{ig} \ar[d] & X \ar[d]^{p} \\
\ast \ar[r]_{x} & Y
}
\end{equation*}
commutes.

Conversely, suppose given a map of cocycles
\begin{equation*}
\xymatrix@R=10pt{
U_{1} \ar[dr]^{f_{1}} \ar[dd] \\
& Y \\
U_{2} \ar[ur]_{f_{2}}
}
\end{equation*}
and write $F_{i} = U_{i} \times_{Y} X$ for $i=1,2$. If either of the
maps $f_{1}$ or $f_{2}$ lifts to $X$ then some $F_{i}$ has a
non-trivial cocycle. Both of the objects $F_{i}$ have non-trivial
cocycles since the map $F_{1} \to F_{2}$ is a weak equivalence. It
follows that if the cocycle $x: \ast \to Y$ is in the path
component of a cocycle $U \to Y$ that lifts to $X$, then the fibre $F$
has a non-trivial cocycle.
\end{proof}

We finish this section by recalling some basic notation and
concepts from \cite{BK}, which will be needed later.

Write $\mathbf{\Delta}_{\leq n}$ for the full subcategory of
$\mathbf{\Delta}$ on the ordinal numbers $\mathbf{k}$ with $k\leq
n$. Write $Tr_{n}X$ for the composite functor
\begin{equation*}
\mathbf{\Delta}_{\leq n} \subset \mathbf{\Delta} \xrightarrow{X} s\mathbf{Set},
\end{equation*}
and let $L_{n}$ be the left adjoint of the truncation functor $X
\mapsto Tr_{n}X$. 
The $n$-skeleton $\sk_{n}Y$ of a simplicial set $Y$ can be defined by
\begin{equation*}
\sk_{n}Y = \varinjlim_{\Delta^{k} \to Y, k \leq n} \ \Delta^{k}.
\end{equation*}
It follows that there is an isomorphism of cosimplicial spaces
\begin{equation*}
\sk_{n}\Delta \cong L_{n}Tr_{n}\Delta.
\end{equation*}

This relationship between skeleta and truncations is used to show
that a map $f: \sk_{n-1}\Delta \to X$ can be extended to a map $f':
\sk_{n}\Delta \to X$ if and only if there is a map (simplex) $f':
\Delta^{n} \to X^{n}$ such that the diagram
\begin{equation*}
\xymatrix{
\sk_{n-1}\Delta^{n} \ar[r]^-{i} \ar[dr]_{f} 
& \Delta^{n} \ar[d]^{f'} \ar[r]^-{s}
& M^{n-1}\Delta \ar[d]^{f_{\ast}} \\
& X^{n} \ar[r]_-{s} & M^{n-1}X
}
\end{equation*}
commutes. Here, the {\it matching space} $M^{n-1}X$ is defined by the
assignment
\begin{equation*}
M^{n-1}X = \varprojlim_{\mathbf{n} \to \mathbf{k}, k< n}\ X^{k} \cong 
\varprojlim_{\mathbf{n} \overset{s}{\twoheadrightarrow} \mathbf{k}, k<n}\ X^{k},
\end{equation*}
which inverse limit can also be defined by the equalizer
\begin{equation*}
\xymatrix{
M^{n-1}X \ar[r] & \prod_{0 \leq i \leq n-1}\ X^{n-1} \ar@<1ex>[r] \ar@<-1ex>[r] 
& \prod_{0 \leq i \leq j \leq n-1}\ X^{n-2}
}
\end{equation*}
which arises from the cosimplicial identities $s^{j}s^{j} =
s^{j+1}s^{i}$, $i \leq j$. The canonical map $s: X^{n} \to M^{n-1}X$
is induced by the map
\begin{equation*}
(s^{i}): X^{n} \to \prod_{0 \leq i \leq n-1}\ X^{n-1}
\end{equation*}
that is defined by the codegeneracies $s^{i}$.

\section{Torsors}

Suppose that $H$ is a cosimplicial groupoid, with source and target
maps $s,t: \Mor(H) \to \Ob(H)$ and identity $e: \Ob(H) \to
\Mor(H)$. 

An {\it $H$-diagram} $X$ in sets can be defined in multiple equivalent ways:
\begin{itemize}
\item[1)] The internal definition: an $H$-diagram $X$ is a
  cosimplicial set map $\pi: X \to Ob(H)$, together with an $H$-action
\begin{equation*}
\xymatrix{
\Mor(H) \times_{s,\pi} X \ar[r]^-{m} \ar[d] & X \ar[d]^{\pi} \\
\Mor(H) \ar[r]_{t} & \Ob(H)
}
\end{equation*}
which respects composition laws and identities of $H$. 

\item[2)]
The $H$-diagram $X$
consists of functors $X^{k}: H^{k} \to \mathbf{Set}$ and natural
transformations $h_{\theta}: X^{m} \to X^{n}\theta$ for each $\theta:
\mathbf{m} \to \mathbf{n}$, such that the usual compatibility conditions
are satisfied. 

The compatibility conditions amount to the following: the
transformation $h_{1}$ associated to an identity morphism is the
identity, and if one is given composable ordinal number maps
\begin{equation*}
\mathbf{m} \xrightarrow{\theta} \mathbf{n} \xrightarrow{\gamma} \mathbf{k}
\end{equation*}
then the diagram of natural transformations
\begin{equation*}
\xymatrix{
X^{m} \ar[r]^{h_{\theta}} \ar[d]_{h_{\gamma\theta}} & X^{n}\theta \ar[d]^{h_{\gamma}\theta} \\
X^{k}(\gamma\theta) \ar[r]_{=} & (X^{k}\gamma)\theta
}
\end{equation*}
commutes. 

\item[3)] Write $E_{\mathbf{\Delta}} H$ for the Grothendieck
  construction of the cosimplicial diagram of groupoids $H$. The
  category $E_{\mathbf{\Delta}} H$ has as objects all pairs
  $(\mathbf{n},x)$ such that $\mathbf{n}$ is an ordinal number and $x$
  is an object of the groupoid $H^{n}$. A morphism $(\gamma,f):
  (\mathbf{n},x) \to (\mathbf{m},y)$ of $E_{\mathbf{\Delta}} H$
  consists of an ordinal number morphism $\gamma: \mathbf{n} \to
  \mathbf{m}$ and a morphism $f: \gamma(x) \to y$ of the groupoid
  $H^{m}$. An $H$-diagram $X$ in sets is a set-valued functor $X:
  E_{\mathbf{\Delta}} H \to \mathbf{Set}$.
\end{itemize}

In the internal functor description 1), the elements of $\Mor(H)^{n}
\times_{s,t} X^{n}$ over an object $i$ of $H^{n}$ are pairs
$(\alpha,x)$ such that $\alpha: i \to j$ is a morphism of $H^{n}$ and
$x$ is a member of the fibre $X^{n}(i)$ over $i$ of the map $X^{n} \to
\Ob(H^{n})$. Then $m(\alpha,x) = \alpha_{\ast}(x) \in X(j)$ defines
the corresponding functor $X^{n}: H^{n} \to \mathbf{Set}$ in description 2).
The transformations $h_{\theta}: X^{m}(i) \to X^{n}(\theta(i))$ in
description 2) correspond to the functions $\theta: X^{m} \to X^{n}$
in the commutative diagrams
\begin{equation}\label{eq 3}
\xymatrix{
X^{m} \ar[r]^{\theta} \ar[d]_{\pi} & X^{n} \ar[d]^{\pi} \\
\Ob(H^{m}) \ar[r]_{\theta} & \Ob(H^{n})
}
\end{equation}
by restriction to fibres.

The diagram (\ref{eq 3})
is the simplicial degree $0$ part of the
commutative diagram
\begin{equation*}
\xymatrix{
\hocolim_{H^{m}}\ X^{m} \ar[r]^{\theta} \ar[d] 
& \hocolim_{H^{n}}\ X^{n} \ar[d] \\
BH^{m} \ar[r]_{\theta} & BH^{n}
}
\end{equation*}
of simplicial set maps which arises from description 2).
The respective homotopy colimits define a cosimplicial space
$\hocolim_{H} X$ and a canonical cosimplicial space map $\hocolim_{H}
X \to BH$.

The homotopy colimit $\hocolim_{H^{n}}X^{n}$ is the standard
Bousfield-Kan homotopy colimit. It is the nerve $B(E_{H^{n}}X^{n})$ of
the translation category $E_{H^{n}}X^{n}$ for the functor $X^{n}:
H^{n} \to \mathbf{Set}$. The objects of this
category are pairs $(i,x)$ with $i \in \Ob(H^{n})$ and $x \in
X^{n}(i)$, and its morphisms $(i,x) \to (j,y)$ consist of pairs
$(\alpha,f)$ such that $\alpha: i \to j$ is a morphism of $H^{n}$ and
$f: X^{n}(\alpha)(x) \to y$ is a function.

The corresponding internally defined functor
$X \to \Ob(H)$ is the part of the simplicial cosimplicial set map
$\hocolim_{H} X \to BH$ in simplicial degree $0$, the identities are defined by 
the degeneracy
\begin{equation*}
s_{0}: X \to (\hocolim_{H} X)_{1} =  \Mor(H) \times_{s,\pi} X,
\end{equation*}
and the multiplication map 
\begin{equation*}
m: \Mor(H) \times_{s,\pi} X \to X
\end{equation*}
 is the face map $d_{0}$. The requirement that the multiplication map
 for the internal functor respects laws of composition amounts to the
 simplicial identity $d_{0}d_{1}=d_{0}d_{0}$, and 
 multiplication respects identities by the relation $d_{1}s_{0} =
 d_{1}s_{1} = 1$.
\medskip

An $H$-diagram $X$ in sets is said to be an $H$-{\it torsor} if the cosimplicial
space $\hocolim_{H}\ X$ is weakly
equivalent to the terminal object $\ast$. A {\it morphism of $H$-torsors}
$f: X \to Y$ is a natural transformation
\begin{equation*} 
\xymatrix@C=10pt{
X \ar[rr]^{f} \ar[dr]_{\pi} && Y \ar[dl]^{\pi} \\
& \Ob(H)
}
\end{equation*}
in the usual sense.

The diagram of cosimplicial spaces
\begin{equation*}
\xymatrix{
X \ar[r] \ar[d] & \hocolim_{H}\ X \ar[d] \\
\Ob(H) \ar[r] & BH
}
\end{equation*}
is sectionwise homotopy cartesian for each $H$-diagram $X$, by the technology
around Quillen's Theorem B --- see
\cite[IV.5.7]{GJ}. It follows that if $f: X \to Y$ is a map of
$H$-torsors, then the map $f: X \to Y$ of cosimplicial sets is a weak
equivalence of cosimplicial spaces, and is therefore an
isomorphism. The category $H-\mathbf{tors}$ of $H$-torsors and natural
transformations between them therefore forms a groupoid.

The functor 
\begin{equation}\label{eq 4}
\hocolim_{H}: H-\mathbf{tors} \to h(\ast,BH)
\end{equation}
 takes
an $H$-torsor $X$ to its {\it canonical cocycle}
\begin{equation*}
\ast \xleftarrow{\simeq} \hocolim_{H}\ X \to BH
\end{equation*}
in cosimplicial spaces. The canonical cocycle functor has a left
adjoint
\begin{equation}\label{eq 5}
\pb: h(\ast,BH) \to H-\mathbf{tors}
\end{equation}
 which is defined in sections by taking path components of pullbacks
 along the canonical maps $B(H^{n}/x) \to BH^{n}$ (see \cite{J39},
 \cite[Lem 9.16]{LocHom}), and we have the following:

\begin{theorem}\label{th 8}
There are induced isomorphisms
\begin{equation*}
\pi_{0}(H-\mathbf{tors}) \cong \pi_{0}h(\ast,BH) \cong [\ast,BH]
\end{equation*}
\end{theorem}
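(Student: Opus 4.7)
The plan is to establish the two isomorphisms separately, leaning on the cocycle machinery that has already been set up. The second isomorphism $\pi_{0}h(\ast,BH) \cong [\ast,BH]$ is a direct application of the general cocycle theorem recalled above, which says that the map $(g,f) \mapsto fg^{-1}$ induces a bijection $\pi_{0}Bh(X,Y) \xrightarrow{\cong} [X,Y]$ in any injective model structure on a diagram category; we apply it with $X = \ast$ and $Y = BH$. So the real content is the first isomorphism $\pi_{0}(H\text{-}\mathbf{tors}) \cong \pi_{0}h(\ast,BH)$, and here the approach is to show that the adjoint pair $(\pb, \hocolim_{H})$ from (\ref{eq 4})--(\ref{eq 5}) descends to a pair of mutually inverse bijections on $\pi_{0}$.

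First I would verify that the canonical cocycle functor $\hocolim_{H}$ lands in the correct place and that the unit $X \to \pb(\hocolim_{H}X)$ is an isomorphism of $H$-torsors. This is a sectionwise computation: at level $n$, the translation category $E_{H^{n}}X^{n}$ has classifying space $\hocolim_{H^{n}}X^{n}$, and its pullback along $B(H^{n}/i) \to BH^{n}$ at an object $i$ has contractible components indexed by $X^{n}(i)$. Taking path components recovers $X^{n}(i)$, and naturality in $i$ and in $\mathbf{n}$ identifies the result with $X$ itself. Because $H\text{-}\mathbf{tors}$ is a groupoid, an isomorphism of functors is all we need to see that $\hocolim_{H}$ induces an injection on $\pi_{0}$.

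Next I would address surjectivity onto $\pi_{0}h(\ast,BH)$, by showing that every cocycle $\ast \xleftarrow{\simeq} U \xrightarrow{f} BH$ is in the same path component as the canonical cocycle of the torsor $\pb(U,f)$. The natural counit map provides, sectionwise, a comparison between $\hocolim_{H}\pb(U,f)$ and $U$ over $BH$; by the homotopy cartesian square following (\ref{eq 5}) (an application of Quillen's Theorem B) together with the fact that the fibres of $U \to BH$ over the contractible slices $B(H^{n}/i)$ are groupoid-like, this comparison is a sectionwise weak equivalence of cosimplicial spaces. That gives a zig-zag of morphisms of cocycles connecting $f$ to $\hocolim_{H}\pb(U,f) \to BH$, and with the injectivity from the previous step completes the bijection.

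The main obstacle is the argument for the counit, i.e.\ identifying $\hocolim_{H}\pb(U,f)$ with $U$ at the level of cocycle path components. The step uses the identification of $U \to BH$ with a diagram pulled back from the slices $B(H^{n}/i)$ and relies on $B(H^{n}/i) \to BH^{n}$ being a sectionwise weak equivalence onto its image plus a homotopy colimit comparison, which I would expect to cite from the references \cite{J39} and \cite[Lem 9.16]{LocHom} rather than reprove in detail.
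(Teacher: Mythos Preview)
Your approach is correct and follows the same outline as the paper: both use the general cocycle result for the second isomorphism and the adjoint pair $(\pb,\hocolim_{H})$ of (\ref{eq 4})--(\ref{eq 5}) for the first. The paper's argument is more economical, however. It does not analyze the unit and counit separately; it simply observes that $\pb$ is left adjoint to $\hocolim_{H}$ (with the construction and verification deferred to \cite{J39} and \cite[Lem 9.16]{LocHom}), and any adjunction between small categories induces a homotopy equivalence of nerves, hence a bijection on $\pi_{0}$. Your explicit verification that the unit $X \to \pb(\hocolim_{H}X)$ is an isomorphism and that the counit supplies a morphism of cocycles is correct and in fact establishes more than is needed---it is closer to showing that the adjunction is an equivalence up to weak equivalence---but for the bare statement of Theorem~\ref{th 8} the existence of the adjunction already suffices.
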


\noindent
Again, $[\ast,BH]$ denotes the set of morphisms in the homotopy category of
cosimplicial spaces, and is also isomorphic to the set
$\pi_{0}\Tot(BH)$. 

Theorem \ref{th 8} is a special case of a
principle which identifies non-abelian cohomology with isomorphism
classes of torsors. The torsors that are considered
here are torsors for groupoids --- this concept is a generalization of
torsors for groups, or classical principal homogeneous spaces.

\begin{remark}
Every torsor $X$ for a cosimplicial groupoid $H$ consists of functors
$X^{n}: H^{n} \to \mathbf{Set}$, which are themselves torsors in the
sense that the simplicial set maps $\hocolim_{H^{n}} X^{n} \to \ast$
are weak equivalences. The simplicial set $\hocolim_{H^{n}} X^{n}$ is
the nerve of the translation groupoid $E_{H^{n}}X^{n}$ which has
objects consisting of pairs $(x,v)$ with $x \in X^{n}(v)$. The set of
objects of $E_{H^{n}}X^{n}$ is non-empty, and so there is a natural transformation
\begin{equation*}
H^{n}(\ ,v) \xrightarrow{x} X^{n}
\end{equation*}
of functors defined on the groupoid $H^{n}$. This natural
transformation is a map of $H^{n}$-torsors, and is therefore an
isomorphism. This transformation is a {\it trivialization} of the torsor
$X^{n}$ in the geometric sense.

The collection of $H$-torsors therefore consists of functors $X
\to \Ob(H)$ such that $X$ has cardinality bounded above by $\vert
\Mor(H) \vert$. We can thus assume that the groupoid
$H-\mathbf{tors}$ is a small groupoid, and so the nerve
$B(H-\mathbf{tors})$ is a simplicial set.

There is a way \cite[Prop 6.7]{LocHom} to replace the cocycle category
$h(\ast,BH)$ by a small category up to ``weak equivalence'', but
we shall not need this here.
\end{remark}

\begin{example}
There are cosimplicial groupoids $H$ for which the associated
cosimplicial space $BH$ is not Bousfield-Kan fibrant.

To see this, observe first of all that if $f: K \to H$ is a morphism
between contractible groupoids, then the induced map $f: BK \to BH$ is
a fibration if and only if $f$ is surjective on objects. 

If $H$ is a cosimplicial contractible groupoid, then all of the groupoids
$M^{n}H$ are contractible: if $(x_{0}, \dots ,x_{n})$ and $(y_{0}, \dots
,y_{n})$ are objects of $M^{n}H$ then there is a unique morphism
$f_{i}:x_{i} \to y_{i}$ in $H^{n-1}$, and these morphisms $f_{i}$ are
consistent with the cosimplicial identities because they specialize to
unique morphisms of $H^{n-2}$ under the codegeneracy maps. It follows
that the maps 
\begin{equation*}
s: BH^{n} \to M^{n-1}BH = B(M^{n-1}H)
\end{equation*} 
are fibrations
if and only if the functors $s: H^{n} \to M^{n-1}H$ are surjective on
objects.

There are cosimplicial sets $X$ for which the functions $s: X^{n} \to
M^{n-1}X$ are not surjective in general. In the cosimplicial category
$\mathbf{\Delta}$, the category $M^{1}\mathbf{\Delta} \cong \mathbf{1}
\times \mathbf{1}$ has four objects, so the functor
\begin{equation*}
s: \mathbf{2} \to M^{1}\mathbf{\Delta}
\end{equation*}
cannot be surjective on objects. 

Suppose that $X$ is a cosimplicial set, and let $C(X)$ be the
degreewise contractible groupoid on $X$. Then the groupoid $C(X)^{n}$ has
the set $X^{n}$ as objects, and has exactly one morphism between any
two elements of $X^{n}$. The groupoid $M^{n}C(X)$ is the contractible
groupoid on the set $M^{n}X$ for all $n$, and so there are
cosimplicial sets $X$ for which the functors $s: C(X)^{n} \to
M^{n-1}C(X)$ are not all surjective on objects.
\end{example}

The cosimplicial space $BH$ for a cosimplicial groupoid $H$ is not
Bous\-field-Kan fibrant in general, but we can form the function complex
$\mathbf{hom}(\Delta,BH)$. This object is the nerve of a groupoid
$H^{\mathbf{\Delta}}$ whose objects are all cosimplicial functors
$\mathbf{\Delta} \to H$ and whose morphisms are the natural
transformations of these functors.

\begin{lemma}\label{lem 11}
Suppose that $U$ is a cosimplicial groupoid such that the map $BU \to
\ast$ is a weak equivalence of cosimplicial spaces. Then
the function complex $\mathbf{hom}(\Delta,BU)$ is a contractible space.
\end{lemma}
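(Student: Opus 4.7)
The plan is to use the identification $\mathbf{hom}(\Delta, BU) = B(U^{\mathbf{\Delta}})$ from the paragraph preceding the statement, and to show directly that the groupoid $U^{\mathbf{\Delta}}$ is equivalent to a point, i.e.\ non-empty with exactly one morphism between any two of its objects. The contractibility of the nerve of such a groupoid is then immediate.

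The hypothesis that $BU \to \ast$ is a weak equivalence of cosimplicial spaces is sectionwise, so each simplicial set $BU^{n}$ is a contractible Kan complex, and hence each groupoid $U^{n}$ is non-empty with a unique morphism between any two of its objects.

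For non-emptiness of $U^{\mathbf{\Delta}}$, I would pick any $x_{0} \in \Ob(U^{0})$ and define a cosimplicial functor $F \colon \mathbf{\Delta} \to U$ by
\begin{equation*}
F_{n}(i) \;=\; \iota_{i}(x_{0}) \;\in\; \Ob(U^{n}),
\end{equation*}
where $\iota_{i} \colon \mathbf{0} \to \mathbf{n}$ is the ordinal map $0 \mapsto i$; on the unique arrow $i \le j$ of $\mathbf{n}$, the functor $F_{n}$ takes the value of the unique morphism of $U^{n}$ with the corresponding endpoints. Cosimplicial compatibility of $F$ with an ordinal map $\theta \colon \mathbf{m} \to \mathbf{n}$ reduces on objects to the identity $\theta \circ \iota_{i} = \iota_{\theta(i)}$ in $\mathbf{\Delta}$, and is automatic on morphisms since any two parallel arrows in $U^{n}$ agree.

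For uniqueness of morphisms in $U^{\mathbf{\Delta}}$, let $F, G$ be two objects and consider a candidate morphism $\eta \colon F \to G$. It consists of arrows $\eta_{n}(i) \colon F_{n}(i) \to G_{n}(i)$ in $U^{n}$; each is uniquely determined by its source and target, and both the naturality of $\eta_{n}$ in $\mathbf{n}$ and the compatibility of the $\eta_{n}$ with the cosimplicial structure of $U$ follow for free from the same uniqueness property. Hence $U^{\mathbf{\Delta}}$ is a contractible groupoid, so its nerve $\mathbf{hom}(\Delta, BU)$ is contractible. The whole argument reduces to a uniqueness calculation in the contractible groupoids $U^{n}$, and no fibrant replacement of $BU$ is needed; the only thing worth being careful about is that the cosimplicial coherences really are automatic in the contractible groupoid setting, but this is the main feature being exploited.
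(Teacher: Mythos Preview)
The proposal is correct and follows essentially the same approach as the paper: both identify $\mathbf{hom}(\Delta, BU)$ with $B(U^{\mathbf{\Delta}})$, construct an explicit object of $U^{\mathbf{\Delta}}$ from a chosen element of $U^{0}$ via the ordinal inclusions $\iota_{i}\colon \mathbf{0} \to \mathbf{n}$, and deduce uniqueness of morphisms from the contractibility of each $U^{n}$. The paper packages this as an isomorphism $U^{\mathbf{\Delta}} \cong U^{0}$, whereas you argue directly that $U^{\mathbf{\Delta}}$ is a contractible groupoid, but the content is the same.
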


\begin{proof}
One shows that there is an isomorphism of groupoids
\begin{equation*}
U^{\mathbf{\Delta}} \cong U^{0},
\end{equation*}
while $U^{0}$ is a contractible groupoid by assumption.

The groupoid $U^{0}$ is non-empty.  Pick $a \in U^{0}$ and let it
define a functor $\mathbf{0} \xrightarrow{a} U^{0}$. The image of the
vertex $i \in \mathbf{n}$ in $U^{n}$ is determined by the composite
\begin{equation*}
\mathbf{0} \xrightarrow{a} U^{0} \xrightarrow{i_{\ast}} U^{n},
\end{equation*}
where $i: \mathbf{0} \to \mathbf{n}$ is the ordinal number morphism
which picks out the vertex $i$. A functor $a:\mathbf{n} \to U^{n}$ is
defined by sending the morphism $i \leq j$ to the unique morphism
$i_{\ast}(a) \to j_{\ast}(a)$ of the groupoid $U^{n}$. The functors
$a: \mathbf{n} \to U^{n}$, $n \geq 0$, define a map $\mathbf{\Delta}
\to U$ of cosimplicial categories.  Conversely, a morphism
$\mathbf{\Delta} \to U$ is completely determined by the object
$\mathbf{0} \to U^{0}$ in cosimplicial degree $0$.

The groupoids
$(U^{n})^{\mathbf{1}}$ of morphisms in $U^{n}$ are contractible, so a
morphism of the groupoid $U^{\Delta}$ is completely determined by the
part in cosimplicial degree $0$.
\end{proof}

Recall that a map $G \to G'$ of groupoids is a
weak equivalence if and only if the induced map $BG \to BG'$ of
classifying spaces is a weak equivalence.

There is a model structure on the category
$\mathbf{Gpd}^{\mathbf{\Delta}}$ of the cosimplicial group\-oids for
which the weak equivalences (respectively fibrations) are those maps
$f: G \to H$ for which the induced maps $BG \to BH$ are weak
equivalences (respectively injective fibrations) of cosimplicial
spaces. This is a special case of general results about sheaves and/or
presheaves of groupoids, for which the usual references are \cite{JT0}
and \cite{H}. 

This model structure has an associated definition of cocycles and
cocycle categories $h(G,H)$ in cosimplicial groupoids: a cocycle in
this category is a diagram
\begin{equation*}
G \underset{\simeq}{\xleftarrow{g}} K \to H
\end{equation*}
in cosimplicial groupoids for which the map $g$ is a weak equivalence.

A cocycle
\begin{equation*}
\ast \xleftarrow{\simeq} V \to BH
\end{equation*}
in cosimplicial spaces defines a cocycle
\begin{equation*}
\ast \xleftarrow{\simeq} \pi(V) \to H
\end{equation*}
in cosimplicial groupoids, by an adjointness argument, where $\pi(V)$
is the result of applying the fundamental groupoid functor in all
cosimplicial degrees. The fundamental groupoid functor therefore
defines a functor of cocycle categories
\begin{equation*}
\pi: h(\ast,BG) \xrightarrow{\cong} h(\ast,G).
\end{equation*}
This functor has a right adjoint
\begin{equation*}
B: h(\ast,G) \to h(\ast,BG)
\end{equation*}
which sends a cocycle 
\begin{equation*}
\ast \xleftarrow{\simeq} H \to G
\end{equation*}
 in
cosimplicial groupoids to the cocycle 
\begin{equation*}
\ast \xleftarrow{\simeq} BH \to
BG
\end{equation*} 
in cosimplicial spaces.  It follows that there are isomorphisms
\begin{equation*}
\pi_{0}h(\ast,G) \cong \pi_{0}h(\ast,BG) \cong [\ast,BG].
\end{equation*}

Make a fixed choice of morphism $x_{U}: \mathbf{\Delta} \to U$ for
all cosimplicial groupoids $U$ such that $U \to \ast$ is a weak
equivalence, as in the proof of Lemma \ref{lem 11}.

Suppose given a cocycle
\begin{equation*}
\ast \xleftarrow{\simeq} U \xrightarrow{f} H
\end{equation*}
in cosimplicial groupoids. We have our fixed choice of morphism $x_{U}:
\mathbf{\Delta} \to U$ of cosimplicial categories. Write $a_{U}$ for
the composite $f\cdot x_{U}$.

Suppose that $g: U \to V$ is a morphism of cocycles, and consider
the diagram
\begin{equation*}
\xymatrix@C=10pt{
& U \ar[rr]^{g} \ar[dr] && V \ar[dl] \\
\mathbf{\Delta} \ar[ur]^{x_{U}} \ar[rr]_{a_{U}} && H 
&& \mathbf{\Delta} \ar[ul]_{x_{V}} \ar[ll]^{a_{V}}
}
\end{equation*}
Then $V^{\mathbf{\Delta}}$ is a contractible groupoid by Lemma
\ref{lem 11}, and so there is a unique natural transformation $g\cdot
x_{U} \to x_{V}$, which induces a morphism $g_{\ast}: a_{U} \to a_{V}$
in $H^{\mathbf{\Delta}}$.

The assignment $g \mapsto g_{\ast}$ is functorial. We have therefore
defined a functor
\begin{equation*}
s: h(\ast,H) \to H^{\mathbf{\Delta}}.
\end{equation*}
The functor
\begin{equation*}
\hocolim_{H}: H-\mathbf{tors} \to h(\ast,BH)
\end{equation*}
factors through (and is defined by) a functor
\begin{equation*}
E_{H}: H-\mathbf{tors} \to h(\ast,H),
\end{equation*}
where $(E_{H}X)^{n}$ for a torsor $X$ is the translation groupoid
which is associated to the functor $X^{n}: H^{n} \to \mathbf{Set}$.

\begin{theorem}\label{th 12}
The composite functor
\begin{equation}\label{eq 6}
H-\mathbf{tors} \xrightarrow{E_{H}} h(\ast,H) \xrightarrow{s} H^{\mathbf{\Delta}}
\end{equation}
is a weak equivalence of groupoids.
\end{theorem}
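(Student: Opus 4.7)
The plan is to establish that the composite (\ref{eq 6}) is essentially surjective and fully faithful, which for a functor of groupoids amounts to a weak equivalence. I would do this by constructing an essential inverse $\Phi: H^{\mathbf{\Delta}} \to H-\mathbf{tors}$ that sends $a: \mathbf{\Delta} \to H$ to the ``representable'' torsor $X_a$ defined by $X_a^n(i) = H^n(a^n(0),i)$ with $H^n$-action by post-composition. For a cosimplicial morphism $\theta: \mathbf{m} \to \mathbf{n}$, the structure map $X_a^m \to X_a^n$ should send $g: a^m(0) \to i$ to the composite
\begin{equation*}
a^n(0) \xrightarrow{a^n(0 \leq \theta(0))} a^n(\theta(0)) = \theta_{\ast}(a^m(0)) \xrightarrow{\theta_{\ast}(g)} \theta_{\ast}(i),
\end{equation*}
using the strict compatibility of $a$ with the cosimplicial structure of $H$. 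The object $X_a$ is an $H$-torsor because the translation groupoid $E_{H^n} X_a^n$ admits $(a^n(0), \mathrm{id})$ as a basepoint with the unique morphism (namely $g$ itself) to any other object $(i,g)$, and is therefore contractible.

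To verify essential surjectivity, I would show that $s \circ E_H \circ \Phi(a) = a$ on the nose. The basepoint $(a^0(0), \mathrm{id}) \in (E_H X_a)^0$ determines the functor $x_{E_H X_a}: \mathbf{\Delta} \to E_H X_a$ of Lemma \ref{lem 11}; a direct unwinding shows that it sends $i \in \mathbf{n}$ to $(a^n(i), a^n(0 \leq i))$ and the morphism $i \leq j$ to the unique morphism in $E_{H^n} X_a^n$ lying over $a^n(i \leq j)$. Post-composing with the projection $E_H X_a \to H$ therefore returns $a$ itself.

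For full faithfulness I would invoke the Yoneda lemma: a natural transformation $\beta^n: X_a^n \to X_{a'}^n$ of functors $H^n \to \mathbf{Set}$ is determined by $\gamma^n := \beta^n(\mathrm{id}) \in H^n(a'^n(0), a^n(0))$, and since $H^n$ is a groupoid we may pass to $\alpha^n := (\gamma^n)^{-1} : a^n(0) \to a'^n(0)$. The cosimplicial-compatibility conditions for a morphism of $H$-torsors $X_a \to X_{a'}$, together with the fact that a morphism $a \to a'$ in $H^{\mathbf{\Delta}}$ is determined by its component at vertex $0 \in \mathbf{n}$ subject to naturality conditions relating that component to the transition morphisms $a^n(0 \leq j)$ and $a'^n(0 \leq j)$, translate into the same family of constraints on $\alpha^0$; this produces the required bijection $\Mor_{H-\mathbf{tors}}(X_a, X_{a'}) \cong \Mor_{H^{\mathbf{\Delta}}}(a, a')$. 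The main obstacle is the intricate bookkeeping needed to match these cosimplicial-compatibility conditions term-by-term, confirming that $\Phi$ and $s \circ E_H$ are mutually essentially inverse.
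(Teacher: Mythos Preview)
Your approach via an explicit ``representable torsor'' inverse $\Phi$ is sound in spirit and differs from the paper's route: the paper verifies full faithfulness directly for arbitrary torsors by producing trivializations $H^{n}(\ ,a(i)) \xrightarrow{\cong} X^{n}$ from the chosen section $x_{E_{H}X}$, and handles essential surjectivity through the $\pb$--$\hocolim$ adjunction rather than by building $X_{a}$ by hand. Your construction is more concrete and avoids invoking that adjunction.

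There is, however, a genuine gap. Your step~3 establishes the bijection $\Mor_{H-\mathbf{tors}}(X_{a},X_{a'}) \cong \Mor_{H^{\mathbf{\Delta}}}(a,a')$ only for torsors in the essential image of $\Phi$. To conclude that $s \circ E_{H}$ is fully faithful on all of $H-\mathbf{tors}$, you need every $H$-torsor $X$ to be isomorphic to some $X_{a}$, i.e.\ that $\Phi$ is essentially surjective. You never address this, and it is exactly the content of the paper's trivialization argument: from $x_{E_{H}X}:\mathbf{\Delta} \to E_{H}X$ one extracts $a = (s\circ E_{H})(X)$ together with compatible elements $x_{i}\in X^{n}(a(i))$, and these assemble into an isomorphism $X_{a}\cong X$ of $H$-diagrams. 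Without this step your argument shows only that $s\circ E_{H}$ restricted to representable torsors is an equivalence onto $H^{\mathbf{\Delta}}$, which does not yet give the theorem.

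A minor point: the functor $s$ is defined using a \emph{fixed} choice of $x_{U}$ for each contractible $U$, made in advance. Your claim that $s\circ E_{H}\circ\Phi(a)=a$ ``on the nose'' presumes that the fixed $x_{E_{H}X_{a}}$ coincides with the one determined by $(a^{0}(0),\mathrm{id})$; in general you only get a canonical isomorphism via Lemma~\ref{lem 11}. This is harmless for essential surjectivity but worth stating correctly.
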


\begin{proof}
Suppose that $X$ is an $H$-torsor. Following the choices made above,
write $x = x_{E_{H}X}$ and $a = a_{E_{H}X}$. Consider the functor $x:
\mathbf{n} \to E_{H^{n}}X^{n}$ in cosimplicial degree $n$. Then for
each $i \in \mathbf{n}$, $x(i) = (a(i),x_{i})$ with $x_{i} \in
X(a(i))$, and there is an induced isomorphism of $H^{n}$-torsors
\begin{equation*}
H^{n}(\ ,a(i)) \underset{\cong}{\xrightarrow{x_{i}}} X^{n}.
\end{equation*} 

If $i \leq j$, then the diagram
\begin{equation*}
\xymatrix@R=10pt{
H^{n}(\ ,a(i)) \ar[dr]^{x_{i}} \ar[dd]_{\alpha_{\ast}} \\
& X^{n} \\
H^{n}(\ ,a(j)) \ar[ur]_{x_{j}}
}
\end{equation*}
commutes, where $\alpha: a(i) \to a(j)$ is defined by the functor $a$.

Suppose that $g: X \to Y$ is a morphism of $H$-torsors. Then all diagrams 
\begin{equation*}
\xymatrix{
H^{n}(\ ,a(i)) \ar[r]^-{x_{i}}_-{\cong} \ar[d]_{s(g_{\ast})} & X^{n} \ar[d]^{g} \\
H^{n}(\ ,b(i)) \ar[r]_-{y_{i}}^-{\cong} & Y^{n}
}
\end{equation*}
commute, where $y = x_{E_{H}Y}$ and $b = a_{E_{H}Y}$.
It follows that the morphism of torsors $g$ is completely
determined by the morphism $s(g_{\ast})$ of $H^{\mathbf{\Delta}}$. The composite
functor (\ref{eq 6}) is therefore fully faithful.

If $\tau: \mathbf{\Delta} \to H$ is an
object of $H^{\mathbf{\Delta}}$ then $\tau$ is a
cocycle, and there is a cocycle morphism
\begin{equation*}
\xymatrix{
\mathbf{\Delta} \ar[r]^-{\eta} \ar[dr]_{\tau} & E_{H} \pb(\tau) \ar[d] \\
& H
}
\end{equation*}
for an $H$-torsor $\pb(\tau)$ which arises from the adjunction of
(\ref{eq 4}) and (\ref{eq 5}). If $x = x_{E_{H}\pb(\tau)}:
\mathbf{\Delta} \to E_{H} \pb(\tau)$ with image $a: \mathbf{\Delta}
\to H$, then there is a unique morphism $\eta \to x$ in the trivial
groupoid $(E_{H} \pb(\tau))^{\mathbf{\Delta}}$, whose image is a
morphism $\tau \to a$ of $H^{\mathbf{\Delta}}$.
\end{proof}

\begin{corollary}\label{cor 13}
The composite functor (\ref{eq 6}) induces a weak equivalence
\begin{equation*}
B(H-\mathbf{tors}) \xrightarrow{\simeq} 
B(H^{\mathbf{\Delta}}) = \mathbf{hom}(\Delta,BH).
\end{equation*}
\end{corollary}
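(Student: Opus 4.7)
The plan is to deduce the corollary directly from Theorem \ref{th 12} together with the identification of the function complex $\mathbf{hom}(\Delta,BH)$ as the nerve of $H^{\mathbf{\Delta}}$, which was established in the paragraph preceding Lemma \ref{lem 11}.

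First I would recall that a morphism $f: G \to G'$ of groupoids is, by definition (as stated just above Lemma \ref{lem 11}), a weak equivalence precisely when $Bf: BG \to BG'$ is a weak equivalence of simplicial sets. Both the domain $H-\mathbf{tors}$ (which is a groupoid by the discussion following the definition of torsors, because any map of $H$-torsors is an isomorphism) and the codomain $H^{\mathbf{\Delta}}$ are groupoids, so this definition applies.

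Next I would invoke Theorem \ref{th 12}, which states that the composite $s \circ E_{H}$ in (\ref{eq 6}) is a weak equivalence of groupoids. By the definition just recalled, applying $B$ to this composite produces a weak equivalence
\begin{equation*}
B(H-\mathbf{tors}) \xrightarrow{\simeq} B(H^{\mathbf{\Delta}}).
\end{equation*}
Finally, the identification $B(H^{\mathbf{\Delta}}) = \mathbf{hom}(\Delta,BH)$ is exactly the description of the function complex given before Lemma \ref{lem 11}: the $n$-simplices of $\mathbf{hom}(\Delta,BH)$ are cosimplicial maps $\Delta \times \Delta^{n} \to BH$, which correspond precisely to the $n$-simplices of the nerve of the groupoid $H^{\mathbf{\Delta}}$ of cosimplicial functors $\mathbf{\Delta} \to H$ and natural transformations.

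There is no genuine obstacle here — the content of the corollary is packaged entirely in Theorem \ref{th 12}; the proof is a two-line bookkeeping argument that combines the theorem with the definition of weak equivalences of groupoids and the prior identification of the function complex.
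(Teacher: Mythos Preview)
Your proposal is correct and matches the paper's approach: the paper states Corollary \ref{cor 13} without proof, treating it as an immediate consequence of Theorem \ref{th 12} together with the identification $B(H^{\mathbf{\Delta}}) = \mathbf{hom}(\Delta,BH)$ established before Lemma \ref{lem 11}. Your unpacking of the implicit two-line argument is exactly right.
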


The functor 
\begin{equation*}
H \mapsto B(H-\mathbf{tors})
\end{equation*}
takes weak equivalences of cosimplicial groupoids to weak
equivalences of spaces, so we also have the following:

\begin{corollary}\label{cor 14}
Any weak equivalence $f: G \to H$ of cosimplicial groupoids induces a
weak equivalence
\begin{equation*}
f_{\ast}: \mathbf{hom}(\Delta,BG) \to \mathbf{hom}(\Delta,BH).
\end{equation*}
\end{corollary}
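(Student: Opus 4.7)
The plan is to reduce to the claim that $H \mapsto B(H-\mathbf{tors})$ sends weak equivalences to weak equivalences, after which Corollary \ref{cor 13} finishes the argument. Granting naturality of the weak equivalence in Corollary \ref{cor 13} with respect to the cosimplicial groupoid variable (the functor $E_{H}$ is natural in $H$ on the nose, and the $s$-functor of Theorem \ref{th 12}, which depends on auxiliary choices of objects $x_{U}$, is natural up to the unique isomorphism guaranteed by the contractibility of $V^{\mathbf{\Delta}}$ in Lemma \ref{lem 11}), any map $f: G \to H$ will sit in a commutative square
\begin{equation*}
\xymatrix{
B(G-\mathbf{tors}) \ar[r]^-{\simeq} \ar[d]_{f_{\ast}} & \mathbf{hom}(\Delta,BG) \ar[d]^{f_{\ast}} \\
B(H-\mathbf{tors}) \ar[r]^-{\simeq} & \mathbf{hom}(\Delta,BH)
}
\end{equation*}
in which the horizontal arrows are weak equivalences, so it will suffice to show the left vertical map is a weak equivalence.

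Since both sides are nerves of groupoids, the goal becomes to show that $f_{\ast}: G-\mathbf{tors} \to H-\mathbf{tors}$ is an equivalence of groupoids. Theorem \ref{th 8} will identify the induced map on isomorphism classes with $[\ast,BG] \to [\ast,BH]$, which is a bijection because $Bf: BG \to BH$ is a weak equivalence of cosimplicial spaces (this is the definition of weak equivalence in $\mathbf{Gpd}^{\mathbf{\Delta}}$ recalled before Lemma \ref{lem 11}). Essential surjectivity and bijectivity on $\pi_{0}$ will follow at once.

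The main obstacle will be full faithfulness, namely showing that $\Aut(X) \to \Aut(f_{\ast}X)$ is an isomorphism for every $G$-torsor $X$. I would attack this by invoking Theorem \ref{th 12} to transport the question to the groupoids $G^{\mathbf{\Delta}}$ and $H^{\mathbf{\Delta}}$: an automorphism of the object $a_{X}: \mathbf{\Delta} \to G$ is a cosimplicially coherent family of morphisms in the groupoids $G^{n}$, and since the weak equivalence $Bf$ is sectionwise, each functor $G^{n} \to H^{n}$ is an equivalence of groupoids, inducing bijections on all automorphism groups compatibly with the cosimplicial identities. A more conceptual finish is to observe that $B(H-\mathbf{tors})$ models the derived space of global sections of the stack completion of $H$, and such derived mapping spaces are manifestly homotopy invariant in $H$.
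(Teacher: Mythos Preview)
Your approach matches the paper's: both reduce to the homotopy invariance of $H \mapsto B(H-\mathbf{tors})$ and then invoke Corollary \ref{cor 13}. The paper simply asserts that invariance in the one sentence preceding the statement, whereas you supply an argument for it.

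Your argument is correct but takes an unnecessary detour. For full faithfulness you pass from $G-\mathbf{tors}$ back to $G^{\mathbf{\Delta}}$ via Theorem \ref{th 12}, and then argue directly that $G^{\mathbf{\Delta}} \to H^{\mathbf{\Delta}}$ is fully faithful using the sectionwise equivalences $G^{n} \to H^{n}$. But $B(G^{\mathbf{\Delta}}) = \mathbf{hom}(\Delta,BG)$ is exactly the object in the statement, so you have in effect proved full faithfulness of the original map directly; the passage through torsors contributes nothing to that half. A cleaner packaging is: prove full faithfulness of $f_{\ast}\colon G^{\mathbf{\Delta}} \to H^{\mathbf{\Delta}}$ directly from the sectionwise equivalences (as you do), and obtain essential surjectivity from the $\pi_{0}$-bijection $[\ast,BG]\cong[\ast,BH]$ via Theorem \ref{th 8} and Corollary \ref{cor 13}. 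Alternatively, staying on the torsor side, one can observe that restriction along each equivalence $G^{n} \to H^{n}$ is an equivalence of functor categories $\mathbf{Set}^{H^{n}} \to \mathbf{Set}^{G^{n}}$, and these assemble to an equivalence of $H$-diagrams with $G$-diagrams carrying torsors to torsors; that is closer to what the paper's one-line assertion has in mind.
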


We say that a cosimplicial groupoid $G$
is a {\it stack} if the cosimplicial space $BG$ is
injective fibrant. The injective model structure for cosimplicial
groupoids is cofibrantly generated, so there is a functorial fibrant
model $j: G \to St(G)$ for a cosimplicial groupoid $G$, such that the
map $j$ is a trivial cofibration and $St(G)$ is injective
fibrant. This fibrant model $St(G)$ is a functorial stack completion for
$G$. More generally, a weak equivalence $G \to H$ of cosimplicial
groupoids such that $H$ is a stack is called a {\it stack completion}
of $G$.

If $G$ is a cosimplicial groupoid and $j: G \to H$ is a stack
completion of $G$, then Corollary \ref{cor 14} implies that the
induced map
\begin{equation*}
\mathbf{hom}(\Delta,BG) \xrightarrow{j_{\ast}} \mathbf{hom}(\Delta,BH)
\end{equation*}
is a weak equivalence. At the same time, the weak equivalence $\Delta
\to \ast$ induces a weak equivalence
\begin{equation*}
\Tot(BH) \cong \mathbf{hom}(\ast,BH) \to \mathbf{hom}(\Delta,BH)
\end{equation*}
since $BH$ is injective fibrant. Thus, in sheaf theoretic language,
the groupoid $G^{\mathbf{\Delta}}$  is equivalent to the groupoid of 
global sections of
the stack completion of the cosimplicial groupoid $G$.

\section{Abelian cohomology}

The results of this section appear in \cite{BK} for the most part, and
are essentially well known. They are included here for the sake of
completeness, and the overall description is from a sheaf theoretic
point of view.

\begin{lemma}\label{lem 15}
Suppose that $A$ is a cosimplicial abelian group. Then there is an
abelian group homomorphism $j: M^{n-1}A \to A^{n}$ such that the
composite $s\cdot j$ is the identity on $M^{n-1}A$. The homomorphism
$j$ is natural in cosimplicial abelian groups $A$.
\end{lemma}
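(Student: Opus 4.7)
The plan is to construct $j$ inductively by building, for each $k \in \{-1,0,\dots,n-1\}$, a natural homomorphism $j_k : M^{n-1}A \to A^n$ with the property that $s^i \cdot j_k(a_0,\dots,a_{n-1}) = a_i$ for every $0 \leq i \leq k$. Then $j := j_{n-1}$ is the desired section, since $s: A^n \to M^{n-1}A$ is by definition the tuple $(s^0,\dots,s^{n-1})$.

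Set $j_{-1} = 0$, and for $0 \leq k \leq n-1$ define
\begin{equation*}
j_k(a_0,\dots,a_{n-1}) \,=\, j_{k-1}(a_0,\dots,a_{n-1}) \,+\, d^{k+1}\bigl(a_k - s^k j_{k-1}(a_0,\dots,a_{n-1})\bigr).
\end{equation*}
This is manifestly a natural abelian group homomorphism, as it is built from additive combinations of the cosimplicial operations $d^{k+1}$ and $s^k$, which are themselves natural in $A$.

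The verification that $s^i j_k = a_i$ for $i \leq k$ is by induction on $k$. For the top index $i = k$, the cosimplicial identity $s^k d^{k+1} = \mathrm{id}$ immediately gives $s^k j_k = s^k j_{k-1} + (a_k - s^k j_{k-1}) = a_k$. For $i < k$, one applies the identity $s^i d^{k+1} = d^k s^i$ (which follows from $s^j d^i = d^{i-1} s^j$ when $i > j+1$, here with $i > k$ replaced by $k+1 > i+1$) together with $s^i s^k = s^{k-1} s^i$ (which follows from $s^j s^i = s^i s^{j+1}$ with $j = k-1$), to obtain
\begin{equation*}
s^i j_k \,=\, a_i \,+\, d^k s^i a_k \,-\, d^k s^{k-1} a_i.
\end{equation*}
The matching condition defining $M^{n-1}A$ from the equalizer reads precisely $s^{k-1} a_i = s^i a_k$ for $i < k$ (this is the identity $s^j s^i = s^i s^{j+1}$, $i \leq j$, applied to a tuple of the form $(s^0 x,\dots,s^{n-1}x)$, after relabeling $j+1 = k$), so the last two terms cancel and $s^i j_k = a_i$.

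The main obstacle is simply the careful bookkeeping in this last step: one must correctly identify which case of $s^j d^i$ and which case of $s^j s^i$ applies, and then match the output against the precise relation cutting out $M^{n-1}A$ inside the product. Once the inductive invariant is maintained through $k = n-1$, the resulting $j = j_{n-1}$ is a natural homomorphism with $s \cdot j = \mathrm{id}_{M^{n-1}A}$.
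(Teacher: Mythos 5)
Your construction is correct and is essentially the paper's own argument in a different bookkeeping: the paper recursively subtracts $sd^{i+1}a_i$ from the element to kill leading coordinates, while you keep the element fixed and accumulate the partial section $j_k$ so that $s^ij_k=a_i$ for $i\le k$; unrolled, both give the same formula, resting on the same identities $s^kd^{k+1}=1$, $s^id^{k+1}=d^ks^i$, and the matching relation $s^ia_k=s^{k-1}a_i$. No gap.
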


\begin{proof}
Suppose that $(0, \dots ,0,a_{i}, \dots ,a_{n-1})$ is an element of
$M^{n-1}A$. Then $s^{j}a_{i} = 0$ for $j < i$, and we have
\begin{equation*}
\begin{aligned}
sd^{i+1}a_{i} 
&= (s^{0}d^{i+1}a_{i}, \dots ,s^{i-1}d^{i+1}a_{i},a_{i},a_{i}, \dots) \\
&= (d^{i}s^{0}a_{i}, \dots ,d^{i}s^{i-1}a_{i},a_{i},a_{i}, \dots) \\
&= (0, \dots ,0,a_{i},a_{i}, \dots)
\end{aligned}
\end{equation*}
It follows that 
\begin{equation*}
(0, \dots, 0,a_{i}, \dots, a_{n}) - sd^{i+1}a_{i} = (0, \dots ,0,a_{i+1}-a_{i}, \dots),
\end{equation*}
and we construct $j(a_{0}, \dots a_{n})$ inductively by setting
\begin{equation*}
j(0,\dots,0,a_{i}, \dots ,a_{n}) = d^{i+1}a_{i} + j(0, \dots ,0,a_{i+1}-a_{i}, \dots).
\end{equation*}
\end{proof}

\begin{corollary}\label{cor 16}
Suppose that $f: A \to B$ is a map of cosimplicial objects in
simplicial abelian groups such that each morphism $f: A^{n} \to B^{n}$
is a fibration of simplicial abelian groups. Then $f$ is a
Bousfield-Kan fibration.
\end{corollary}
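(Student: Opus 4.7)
The plan is to verify the Bousfield--Kan matching criterion: a map $f : A \to B$ of cosimplicial spaces is a Bousfield--Kan fibration if and only if for every $n \ge 0$ the relative matching map
\begin{equation*}
A^{n} \longrightarrow B^{n} \times_{M^{n-1}B} M^{n-1}A
\end{equation*}
is a Kan fibration (with the convention $M^{-1}X = \ast$). Under the hypothesis that each $f : A^{n} \to B^{n}$ is a Kan fibration of simplicial abelian groups, I would show this by exploiting the natural splittings of $s : A^{n} \to M^{n-1}A$ produced by Lemma \ref{lem 15}.

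First I would apply Lemma \ref{lem 15} in each simplicial degree $k$ to the cosimplicial abelian group $A_{k}$ defined by $(A_{k})^{n} = (A^{n})_{k}$, obtaining abelian group sections $j : M^{n-1}(A_{k}) \to (A^{n})_{k}$. Naturality in cosimplicial abelian groups, applied to the simplicial face and degeneracy maps which relate $A_{k}$ to $A_{k \pm 1}$, assembles these into a morphism of simplicial abelian groups $j_{A} : M^{n-1}A \to A^{n}$ with $s \cdot j_{A} = 1$, and analogously $j_{B}$ for $B$. The same naturality applied to $f$ itself yields $f \cdot j_{A} = j_{B} \cdot M^{n-1}f$.

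Writing $K^{n}A = \ker(s : A^{n} \to M^{n-1}A)$ and $K^{n}B = \ker(s : B^{n} \to M^{n-1}B)$, these splittings give decompositions $A^{n} \cong K^{n}A \oplus M^{n-1}A$ and $B^{n} \cong K^{n}B \oplus M^{n-1}B$ as simplicial abelian groups. Since $f$ commutes with both $s$ and $j$, it respects the decompositions and takes the form $f = f_{K} \oplus M^{n-1}f$ for a restricted map $f_{K} : K^{n}A \to K^{n}B$. A direct computation then identifies the pullback $B^{n} \times_{M^{n-1}B} M^{n-1}A$ with $K^{n}B \oplus M^{n-1}A$, and under this identification the relative matching map is $f_{K} \oplus \mathrm{id}_{M^{n-1}A}$.

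To finish, I would observe that $f_{K}$ is a retract of $f$ in the arrow category, via the kernel inclusion $K^{n}A \hookrightarrow A^{n}$ together with the complementary projection $a \mapsto a - j_{A}s(a)$ (and similarly for $B$), so $f_{K}$ inherits the Kan fibration property from $f$. The map $f_{K} \oplus \mathrm{id}_{M^{n-1}A}$ is then the base change of $f_{K}$ along the projection $K^{n}B \oplus M^{n-1}A \to K^{n}B$, so it is also a Kan fibration. I expect the main obstacle to be purely bookkeeping: checking that the degreewise splittings of Lemma \ref{lem 15} genuinely assemble into simplicial abelian group maps and that they are compatible with $f$. Once this naturality is in place, the remainder is a formal retract and base-change argument.
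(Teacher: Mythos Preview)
Your proof is correct and follows essentially the same strategy as the paper's: use the natural splitting of Lemma \ref{lem 15} to exhibit the map on kernels $f_{K}: K^{n}A \to K^{n}B$ as a retract (equivalently, a direct summand) of $f: A^{n} \to B^{n}$, so that it inherits the fibration property, and then conclude that the relative matching map is a fibration. The only cosmetic difference is that the paper first passes through the Dold--Kan correspondence to replace ``Kan fibration'' by ``surjective in positive chain degrees'' and then argues with abelian groups, whereas you argue directly at the level of simplicial abelian groups via the retract and base-change closure properties; the underlying idea is identical.
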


\begin{proof}
We use the Dold-Kan correspondence \cite[III.2.3]{GJ},
\cite[III.2.11]{GJ} to suppose that $p: A \to B$ is a morphism of
cosimplicial objects in chain complexes such that each chain map $f:
A^{n} \to B^{n}$ is surjective in non-zero degrees.

Suppose that $m > 0$ and consider the map
\begin{equation*}
A^{n+1}_{m} \to B^{n+1}_{m} \times_{M^{n}B_{m}} M^{n}A_{m}.
\end{equation*}
We want to show that this homomorphism is surjective.

For this, it suffices to show that the indicated map $p_{\ast}$ in the
comparison of exact sequences
\begin{equation*}
\xymatrix{
0 \ar[r] & K_{1} \ar[r] \ar[d]_{p_{\ast}} & A^{n+1}_{m} \ar[r]^{s} \ar[d]^{p} 
& M^{n}A_{m} \ar[r] \ar[d]^{p} & 0 \\
0 \ar[r] & K_{2} \ar[r] & B^{n+1}_{m} \ar[r]_{s} & M^{n}B_{m} \ar[r] & 0 
}
\end{equation*}
is surjective. This map $p_{\ast}$ is a direct summand of the
surjective map $p: A^{n+1}_{m} \to B^{n+1}_{m}$ by Lemma \ref{lem 15},
and is therefore surjective.
\end{proof}

\begin{corollary}\label{cor 17}
Suppose that $A$ is a cosimplicial object in simplicial abelian
groups. Then there are isomorphisms
\begin{equation*}
[\ast,A] \cong \pi_{0}\mathbf{hom}(\Delta,A) \cong \pi_{ch}(N\mathbb{Z}\Delta,NA),
\end{equation*}
where $N$ is the normalized chains functor and $\pi_{ch}(\ ,\ )$ is
chain homotopy classes of maps in cosimplicial chain complexes.
\end{corollary}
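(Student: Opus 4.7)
The statement has two isomorphisms and I would handle them in turn.

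For the first isomorphism $[\ast,A]\cong \pi_{0}\mathbf{hom}(\Delta,A)$: each $A^{n}$ is a simplicial abelian group, hence the unique map $A^{n}\to\ast$ is a fibration of simplicial abelian groups, so Corollary \ref{cor 16} implies that $A$ is Bousfield–Kan fibrant. Since $\Delta$ is Bousfield–Kan cofibrant, the argument used to prove Lemma 2 applies verbatim to give a natural weak equivalence $\mathbf{hom}(\Delta,A)\simeq \Tot(A)$, and passing to path components with the identification $\pi_{0}\Tot(X)=[\ast,X]$ from equation (1) delivers the first isomorphism.

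For the second isomorphism, the crucial observation is that $V:=\mathbf{hom}(\Delta,A)$ is itself a simplicial abelian group, via pointwise addition in $A$. Hence $\pi_{0}V\cong H_{0}NV$, and the task becomes an identification of $H_{0}NV$ with $\pi_{ch}(N\mathbb{Z}\Delta,NA)$. By the free-forget adjunction, an $n$-simplex of $V$ --- namely a cosimplicial map $\Delta\times\Delta^{n}\to A$ --- is the same datum as a cosimplicial simplicial abelian group map $\mathbb{Z}(\Delta\times\Delta^{n})\to A$. Applying the Dold–Kan correspondence pointwise in the cosimplicial direction converts such data into cosimplicial chain complex maps $N\mathbb{Z}(\Delta\times\Delta^{n})\to NA$, and the normalized Eilenberg–Zilber theorem provides a natural chain homotopy equivalence $N\mathbb{Z}(\Delta\times\Delta^{n})\simeq N\mathbb{Z}\Delta\otimes N\mathbb{Z}\Delta^{n}$. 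Reading off the pieces in simplicial degrees $0$ and $1$ and computing $H_{0}NV=V_{0}/\{\,d_{0}z\mid d_{1}z=0\,\}$ then identifies the $0$-cells with chain maps $N\mathbb{Z}\Delta\to NA$ and identifies the path-component relation with the nullhomotopy relation detected by $N\mathbb{Z}\Delta^{1}$; this is precisely $\pi_{ch}(N\mathbb{Z}\Delta,NA)$.

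The step I expect to be the main obstacle is the compatibility of $\Delta^{1}$-homotopies of cosimplicial space maps with chain homotopies of cosimplicial chain complex maps. At the level of simplicial abelian groups, $\mathbb{Z}(\Delta\times\Delta^{1})$ differs from $\mathbb{Z}\Delta\otimes\mathbb{Z}\Delta^{1}$, so one cannot avoid a careful appeal to the normalized Eilenberg–Zilber equivalence; this is exactly what ensures that the simplicial interval used in the $\pi_{0}$-relation on $V$ matches the chain interval $N\mathbb{Z}\Delta^{1}$ needed to describe chain homotopies, all while respecting the ambient cosimplicial structure.
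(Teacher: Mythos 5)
Your proof is correct. The first isomorphism is handled exactly as in the paper: Corollary \ref{cor 16} gives Bousfield--Kan fibrancy of $A$, and since $\Delta$ is Bousfield--Kan cofibrant, $\pi_{0}\mathbf{hom}(\Delta,A)$ computes $[\Delta,A]\cong[\ast,A]$. For the second isomorphism, however, you take a genuinely different route. The paper argues with a \emph{path object}: chain homotopy is encoded by the standard path object for chain complexes, which transports through the Dold--Kan correspondence to a path object for cosimplicial simplicial abelian groups, with Corollary \ref{cor 16} invoked a second time to guarantee that the resulting projection is a Bousfield--Kan fibration; right homotopy through this path object then agrees with left homotopy via $\Delta\times\Delta^{1}$ because $\Delta$ is cofibrant and $A$ is fibrant. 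You instead work with the \emph{cylinder}: using the free--forgetful adjunction and Dold--Kan to identify simplices of $\mathbf{hom}(\Delta,A)$ with cosimplicial chain maps out of $N\mathbb{Z}(\Delta\times\Delta^{n})$, and then the normalized Eilenberg--Zilber equivalence (shuffle and Alexander--Whitney maps, both natural and both restricting to the identity on the two ends) to match $\Delta^{1}$-homotopies with chain homotopies parametrized by $N\mathbb{Z}\Delta\otimes N\mathbb{Z}\Delta^{1}$. Your approach avoids the second appeal to Corollary \ref{cor 16} and makes the combinatorics of the homotopy relation completely explicit, at the cost of importing the Eilenberg--Zilber machinery and the observation that $\pi_{0}V\cong H_{0}NV$ for the simplicial abelian group $V=\mathbf{hom}(\Delta,A)$; the paper's path-object argument is shorter but leans on the model-categorical equivalence of left and right homotopy. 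You correctly flag the only delicate point, namely that $\mathbb{Z}(\Delta\times\Delta^{1})$ and $\mathbb{Z}\Delta\otimes\mathbb{Z}\Delta^{1}$ must be compared through the normalized Eilenberg--Zilber equivalence rather than identified outright.
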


\begin{proof}
The cosimplicial object $A$ in simplicial abelian groups is
Bousfield-Kan fibrant by Corollary \ref{cor 16}, and so the group of
homotopy classes of maps $\Delta \to A$ coincides with the group of morphisms
$[\Delta,A] \cong [\ast,A]$ in the homotopy category of cosimplicial
spaces.

Chain homotopy is defined by a natural path object for chain
complexes, which therefore defines a path object for cosimplicial
objects in simplicial abelian groups through the Dold-Kan
correspondence, again by Corollary \ref{cor 16}. It follows that the
morphisms $\Delta \to A$ are homotopic if and only if the
corresponding morphisms $N\mathbb{Z}\Delta \to NA$ are chain
homotopic.
\end{proof}

Now suppose that $A$ is a cosimplicial abelian group. For $k \leq n-1$,
let $M_{k}^{n-1}A$ be the set of tuples $(a_{0}, \dots ,a_{k})$ with
$a_{i} \in A^{n-1}$ and $s^{i}a_{j} = s^{j-1}a_{i}$ for $i<j$. There
is a natural map $s: A^{n} \to M_{k}^{n-1}A$ which is defined by
\begin{equation*}
s(a) = (s^{0}a,s^{1}a, \dots ,s^{k}a).
\end{equation*}
This morphism $s$ has a natural splitting and is therefore surjective,
as in the proof of Lemma \ref{lem 15}. 

Write $cN_{k}A^{n}$ for the kernel of the map $s: A^{n} \to
M_{k}^{n-1}A$. Then $cN_{k}A^{n}$ is the intersection of the kernels of
the $s^{i}: A^{n} \to A^{n-1}$, for $0 \leq i \leq k$.

The coboundary 
\begin{equation*}
\delta = \sum_{i=0}^{n} (-1)^{i}d^{i}: A^{n} \to A^{n+1}
\end{equation*}
induces a morphism $cN_{k}A^{n} \to cN_{k}A^{n+1}$. We therefore have
a natural cochain inclusion $cN_{k}A \subset A$. Write $cNA$ for the
intersection of the complexes $cN_{k}A$ in $A$.

\begin{lemma}\label{lem 18}
Suppose that $A$ is a cosimplicial abelian group.
Then the cochain complex map $cN_{k}A \subset A$ is a cohomology isomorphism
for all $k$. The inclusion $cNA \subset A$ is also a cohomology isomorphism.
\end{lemma}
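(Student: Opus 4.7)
My plan is to argue by induction on $k$, mirroring the Moore-type proof of the simplicial Dold--Kan normalization theorem.

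The base case $k=-1$ is trivial since $cN_{-1}A = A$. For the inductive step, I reduce to showing $cN_k A \hookrightarrow cN_{k-1} A$ is a cohomology isomorphism. Using the cosimplicial identities $s^i s^k = s^{k-1} s^i$, $s^i d^{k+1} = d^k s^i$ (both valid for $i < k$), and $s^k d^{k+1} = \mathrm{id}$, one checks that $s^k$ restricts to a map $cN_{k-1} A^n \to cN_{k-1} A^{n-1}$ with kernel $cN_k A^n$, and that $d^{k+1}$ restricts to a section. This yields a natural split short exact sequence of abelian groups
\begin{equation*}
0 \to cN_k A^n \to cN_{k-1} A^n \xrightarrow{s^k} cN_{k-1} A^{n-1} \to 0
\end{equation*}
in each cosimplicial degree, refining the splitting from Lemma \ref{lem 15}.

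To finish the inductive step I would exhibit an explicit cochain contraction of the quotient complex $Q_k := cN_{k-1} A / cN_k A$. Via the above splitting, $Q_k^n \cong cN_{k-1}A^{n-1}$, and a direct computation using the cosimplicial identities expresses the induced differential on $Q_k$ as a truncated alternating sum $\alpha \mapsto -\sum_{j>k}(-1)^j d^j \alpha$. The contraction is built inductively from $s^k$, in the same pattern as Moore's chain contraction for simplicial abelian groups; a direct calculation gives $(\delta s^k + s^k \delta)(a) = \sum_{j=0}^k (-1)^j d^j s^k a$ on $cN_{k-1} A^n$, and subsequent correction terms are absorbed by further applications of the cosimplicial identities. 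Acyclicity of $Q_k$ combined with the long exact sequence of cohomology gives the required cohomology isomorphism.

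For the second statement, observe that the codegeneracies emanating from $A^n$ are precisely $s^0, \dots, s^{n-1}$, so $cN_k A^n = cNA^n$ whenever $k \geq n-1$. Hence the inclusion $cNA \hookrightarrow A$ coincides degree-by-degree with $cN_k A \hookrightarrow A$ for sufficiently large $k$, and is therefore a cohomology isomorphism by the first part. The main technical obstacle is the explicit construction of the contracting homotopy on $Q_k$: the truncated differential makes the natural candidate $h = s^k$ insufficient on its own, and the signs and additional correction terms must be assembled in a Moore-style inductive fashion using only the cosimplicial identities and the splitting from Lemma \ref{lem 15}.
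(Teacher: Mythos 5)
Your setup is the same as the paper's: you reduce to the inclusions $cN_{k}A \subset cN_{k-1}A$, exhibit the split short exact sequence $0 \to cN_{k}A^{n} \to cN_{k-1}A^{n} \xrightarrow{s^{k}} cN_{k-1}A^{n-1} \to 0$ (the paper's version is identical up to a shift of $k$ by one), and correctly identify the induced differential on the quotient $Q_{k}$ as the truncated sum $-\sum_{j>k}(-1)^{j}d^{j}$. Your disposal of the second assertion, via the stabilization $cN_{k}A^{n} = cNA^{n}$ for $k \geq n-1$, is also fine and is exactly the observation behind the paper's remark that the quotient vanishes in degrees $n$ with $k \geq n-1$.

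The gap is at the one step that actually requires verification: acyclicity of $Q_{k}$. You assert that the natural candidate $h = s^{k}$ is ``insufficient on its own'' and that Moore-style correction terms must be assembled, but you never produce the contraction; you close by listing it as an unresolved obstacle. In fact there is no obstacle. On $Q_{k}^{n} \cong cN_{k-1}A^{n-1}$ with differential $\delta_{\ast} = -\sum_{j=k+1}^{n}(-1)^{j}d^{j}$, the single map $h = (-1)^{k}s^{k}$ is already an exact contracting homotopy: since $s^{k}d^{k+1} = \mathrm{id}$ and $s^{k}d^{j} = d^{j-1}s^{k}$ for $j > k+1$, one gets $s^{k}\delta_{\ast}\alpha = (-1)^{k}\alpha + \sum_{j=k+1}^{n-1}(-1)^{j}d^{j}s^{k}\alpha$, which telescopes against $\delta_{\ast}s^{k}\alpha$ to give $s^{k}\delta_{\ast} + \delta_{\ast}s^{k} = (-1)^{k}\cdot\mathrm{id}$. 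This two-line computation is precisely what the paper does. The identity you do compute, $(\delta s^{k} + s^{k}\delta)(a) = \sum_{j=0}^{k}(-1)^{j}d^{j}s^{k}a$ on $cN_{k-1}A^{n}$, involves the full differential $\delta$ rather than $\delta_{\ast}$, so it does not bear directly on $Q_{k}$; it could instead be completed into a deformation retraction of $cN_{k-1}A$ onto $cN_{k}A$, but only after correcting the sign to $h = (-1)^{k}s^{k}$ (with $h = s^{k}$ and $k$ odd, the operator $1 - (\delta h + h\delta)$ does not land in $cN_{k}A$) and then checking that $1 - (\delta h + h\delta)$ takes values in $cN_{k}A$ and restricts to the identity there --- none of which you carry out. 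As written, the argument stops exactly where the paper's computation begins.
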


\begin{proof}
There are short exact sequences of cochain complexes
\begin{equation*}
0 \to cN_{k+1}A^{n} \to cN_{k}A^{n} \xrightarrow{s_{\ast}} C^{n} \to 0
\end{equation*}
where $C^{n} = 0$ if $k \geq n-1$ and the map $s_{\ast}$ is the map
\begin{equation*}
s^{k+1}: cN_{k}A^{n} \to cN_{k}A^{n-1}
\end{equation*}
if $k < n-1$.  In the latter case, the map $s^{k+1}$ has a section given
by $d^{k+2}$.

Suppose that $n>k+1$ and form the diagram
\begin{equation*}
\xymatrix{
0 \ar[r] & cN_{k+1}A^{n} \ar[r] \ar[d]_{\delta} 
& cN_{k}A^{n} \ar[r]^{s^{k+1}} \ar[d]^{\delta} 
& cN_{k}A^{n-1} \ar[r] \ar[d]^{\delta_{\ast}} & 0 \\
0 \ar[r] & cN_{k+1}A^{n+1} \ar[r] & cN_{k}A^{n+1} \ar[r]_{s^{k+1}} & cN_{k}A^{n} \ar[r] & 0
}
\end{equation*}
Then 
\begin{equation*}
s^{k+1}(\sum_{j=0}^{n+1} (-1)^{j}d^{j})(x) = (\sum_{j=k+3}^{n+1} (-1)^{j}d^{j-1}s^{k+1})(x)
\end{equation*}
for $x \in cN_{k}A^{n}$, so that
\begin{equation*}
\delta_{\ast} = \sum_{j=k+2}^{n} (-1)^{j+1}d^{j}
\end{equation*}
for $n > k+1$. The cochain complex $C^{\ast}$ has a contracting
homotopy defined by the maps $s_{k+1}$ in degrees where it is
non-zero.

It follows that all inclusions $cN_{k+1}A \subset cN_{k}A$ are
cohomology isomorphisms. 

A similar argument shows that the inclusion $cN_{0}A \subset A$ is a
cohomology isomorphism. The quotient complex for this
inclusion is isomorphic to the cochain complex
\begin{equation*}
0 \to A^{0} \xrightarrow{d^{1}} A^{1} \xrightarrow{d^{1}-d^{2}} A^{2} \xrightarrow{d^{1}-d^{2}+d^{3}} A^{3} \to \dots,
\end{equation*}
and the quotient map is the map $s^{0}$ in positive degrees. The
contracting homotopy is the map $s^{1}$.
\end{proof}

Suppose again that $A$ is a cosimplicial abelian group, and form the
cosimplicial space $K(A,n)$, as a cosimplicial object in
simplicial abelian groups.

\begin{lemma}\label{lem 19}
Suppose that $A$ is a cosimplicial abelian group. Then there is a
natural isomorphism
\begin{equation*}
\pi_{0}\mathbf{hom}(\Delta,K(A,n)) \cong H^{n}(A),
\end{equation*}
where $H^{n}(A)$ is the $n^{th}$ cohomology group of the cochain
complex associated to $A$.
\end{lemma}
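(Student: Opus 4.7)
The plan is to reduce the problem to a chain-homotopy computation using Corollary \ref{cor 17} and then pin down, combinatorially, that chain-homotopy classes of cosimplicial chain maps $N\mathbb{Z}\Delta \to NK(A,n)$ are precisely $Z^n(A)/B^n(A)$.

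First I would apply Corollary \ref{cor 17} to the cosimplicial object $K(A,n)$ in simplicial abelian groups, yielding a natural isomorphism
\begin{equation*}
\pi_{0}\mathbf{hom}(\Delta, K(A,n)) \cong \pi_{ch}(N\mathbb{Z}\Delta, NK(A,n)).
\end{equation*}
Under the Dold--Kan correspondence, $NK(A,n)$ is the cosimplicial chain complex $A[n]$, i.e., the complex concentrated in chain degree $n$ with value $A^{k}$ in cosimplicial degree $k$ and with the coboundary maps $d^{i}\colon A^{k} \to A^{k+1}$ inherited from $A$.

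Next I would identify chain maps $f\colon N\mathbb{Z}\Delta \to A[n]$ with cocycles. Because the target is concentrated in chain degree $n$, such an $f$ is determined by morphisms $f^{k}_{n}\colon N\mathbb{Z}\Delta^{k}_{n} \to A^{k}$ which vanish on the image of the chain boundary $\partial\colon N\mathbb{Z}\Delta^{k}_{n+1} \to N\mathbb{Z}\Delta^{k}_{n}$. In cosimplicial degree $n$, the group $N\mathbb{Z}\Delta^{n}_{n}$ is free of rank one on the identity simplex $\iota_{n}$, so $f^{n}_{n}$ is determined by the element $a := f^{n}_{n}(\iota_{n}) \in A^{n}$. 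Naturality of $f$ in the cosimplicial direction, applied to a non-degenerate $n$-simplex $\theta\colon \mathbf{n} \to \mathbf{k}$ of $\Delta^{k}$, forces $f^{k}_{n}(\theta) = \theta(a)$, and normalization forces $f^{k}_{n}$ to vanish on degenerate simplices; thus $f$ is completely determined by $a$. Applying the boundary condition to $\iota_{n+1} \in N\mathbb{Z}\Delta^{n+1}_{n+1}$ (whose boundary is $\sum_{i=0}^{n+1}(-1)^{i}d_{i}$) gives $\sum_{i} (-1)^{i} d^{i}(a) = 0$, i.e., $a \in Z^{n}(A)$. Conversely, any cocycle $a \in Z^{n}(A)$ yields, via the assignment $\theta \mapsto \theta(a)$ on non-degenerate simplices, a well-defined chain map of cosimplicial chain complexes.

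Finally I would analyze chain homotopies. A cosimplicial chain homotopy $s\colon f \Rightarrow g$ has a single non-trivial component $s^{k}_{n-1}\colon N\mathbb{Z}\Delta^{k}_{n-1} \to A^{k}$, and the relation $f - g = s \partial + \partial s$ reduces, in degree $n$, to $(f-g)_{n} = s_{n-1} \circ \partial$. Evaluating this at $\iota_{n}$ and expanding $\partial \iota_{n} = \sum_{i}(-1)^{i}d_{i}$, together with cosimplicial naturality of $s$, which gives $s^{n}_{n-1}(d_{i}) = d^{i}(c)$ for $c := s^{n-1}_{n-1}(\iota_{n-1}) \in A^{n-1}$, yields $a_{f} - a_{g} = \delta c$. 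The converse assignment $c \mapsto s$ produces an explicit chain homotopy between the maps attached to cocycles differing by $\delta c$. This gives the desired bijection, which is manifestly natural in $A$.

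The main obstacle is the bookkeeping in the middle step: verifying that cosimplicial naturality together with the normalization condition exactly force the chosen element $a \in A^{n}$ to be a cocycle, and that the construction $a \mapsto f$ actually respects all cosimplicial and normalization constraints (not just the ones used to extract $a$). Once one checks that non-degenerate $n$-simplices of $\Delta^{k}$ correspond bijectively to order-preserving injections $\mathbf{n} \hookrightarrow \mathbf{k}$, these verifications are routine.
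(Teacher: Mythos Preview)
There is a genuine gap in the converse direction of your middle step. You claim that any cocycle $a \in Z^{n}(A)$ produces a cosimplicial chain map via $\theta \mapsto \theta(a)$ on non-degenerate simplices, but cosimplicial naturality with respect to the \emph{codegeneracies} is not automatic. Take the simplest case $k=n$, $\theta = \iota_{n}$, and a codegeneracy $s^{j}: \mathbf{n} \to \mathbf{n-1}$: naturality requires $s^{j}\bigl(f^{n}_{n}(\iota_{n})\bigr) = f^{n-1}_{n}\bigl((s^{j})_{\ast}\iota_{n}\bigr)$. The right-hand side vanishes because $(s^{j})_{\ast}\iota_{n}$ is degenerate, hence zero in normalized chains; therefore $s^{j}(a) = 0$ for all $j$. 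The same reasoning applied to the homotopy forces $c = s^{n-1}_{n-1}(\iota_{n-1})$ to satisfy $s^{j}(c)=0$ as well. So your computation actually identifies $\pi_{ch}(N\mathbb{Z}\Delta, A[-n])$ with $H^{n}(cNA)$, the cohomology of the \emph{conormalized} cochain complex, not directly with $H^{n}(A)$.

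This is exactly how the paper proceeds: it observes that the commutativity with both $d^{i}$ and $s^{j}$ forces $f(\iota_{n}) \in cNA^{n}$, obtains $\pi_{ch}(N\mathbb{Z}\Delta, A[-n]) \cong H^{n}(cNA)$, and then invokes Lemma~\ref{lem 18} (the inclusion $cNA \subset A$ is a cohomology isomorphism) to finish. Your final paragraph flags the need to check ``all cosimplicial and normalization constraints'' but dismisses it as routine; in fact that check is where the conormalization condition appears, and the passage from $H^{n}(cNA)$ to $H^{n}(A)$ is the content of Lemma~\ref{lem 18}, which your argument never uses.
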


\begin{proof}
A cosimplicial chain map $f: N\mathbb{Z}\Delta \to A[-n]$ is uniquely
specified by the chain complex morphisms
\begin{equation*}
f^{k}: N\mathbb{Z}\Delta^{k} \to A^{k}[-n]
\end{equation*}
for $k \geq n$, which morphisms respect the cosimplicial identities. These
cochain complex morphisms are completely determined by the morphism
$f^{n}$, and in particular the image $f(\iota_{n}) \in A^{n}$ of the
classifying simplex. The requirement that the diagram
\begin{equation*}
\xymatrix{
N\mathbb{Z}\Delta^{n+1} \ar[r]^{f^{n+1}} & A^{n+1}[-n] \\
N\mathbb{Z}\Delta^{n} \ar[r]^{f^{n}} \ar[u]^{d^{i}} \ar[d]_{s^{j}} & A^{n}[-n] \ar[u]_{d^{i}} \ar[d]^{s^{j}} \\
N\mathbb{Z}\Delta^{n-1} \ar[r]_{f^{n-1}=0} & A^{n-1}[-n] 
}
\end{equation*}
commutes forces $f(\iota_{n}) \in cNA^{n}$ and
$\sum_{i=0}^{n+1}(-1)^{i}d^{i}f(\iota_{n}) = 0$. Conversely, a cycle $z
\in cNA^{n}$ completely determines the map $f$.

Similarly, a cosimplicial chain homotopy $s$
between chain maps $f,g: N\mathbb{Z}\Delta^{n} \to A[n]$ is defined by
the element $s(\iota_{n-1}) \in cNA^{n-1}$ such that 
\begin{equation*}
\delta s(\iota_{n-1}) = f(\iota_{n})-g(\iota_{n})
\end{equation*}
in $cNA^{n}$. 

It follows that there is an isomorphism
\begin{equation*}
\pi_{ch}(N\mathbb{Z}\Delta,A[-n]) \cong H^{n}(cNA)
\end{equation*}
which is natural in cosimplicial abelian groups $A$.
There are natural isomorphisms
\begin{equation*}
[\ast,K(A,n)] \cong \pi_{ch}(N\mathbb{Z}\Delta,A[-n]) 
\end{equation*}
and
\begin{equation*}
H^{n}(cNA) \cong H^{n}(A)
\end{equation*}
by Corollary \ref{cor 17} and Lemma \ref{lem 18}, respectively.
\end{proof}

\begin{corollary}\label{cor 20}
Suppose that $A$ is a cosimplicial abelian group. Then there are
natural isomorphisms
\begin{equation*}
\pi_{k}\mathbf{hom}(\Delta,K(A,n)) \cong
\begin{cases}
H^{n-k}(A) & \text{if $0 \leq k \leq n$} \\
0 & \text{if $k>n$.}
\end{cases}
\end{equation*}
\end{corollary}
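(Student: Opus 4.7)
The plan is to reduce the general case to the $k=0$ case, which is Lemma \ref{lem 19}, via a loop space argument. Since $K(A,n)$ is a cosimplicial object in simplicial abelian groups, the function complex $Y := \mathbf{hom}(\Delta, K(A,n))$ inherits the structure of a simplicial abelian group, by pointwise addition of cosimplicial maps $\Delta \times \Delta^m \to K(A,n)$, with the canonical basepoint given by $0$. In particular $Y$ is automatically a Kan complex, so its homotopy groups are computed by $\pi_k Y \cong \pi_0 \Omega^k Y$ for $k \geq 0$.

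Next, I would use that $\mathbf{hom}(\Delta, -)$ preserves limits (it is an end, hence a limit in the cosimplicial variable), so it commutes with the loop functor $\Omega$ taken sectionwise. Writing $\Omega^k K(A,n)$ for the cosimplicial space with $(\Omega^k K(A,n))^{m} = \Omega^k K(A^m,n)$ based at $0$, this gives a natural isomorphism
\[
\Omega^k \mathbf{hom}(\Delta, K(A,n)) \cong \mathbf{hom}(\Delta, \Omega^k K(A,n)).
\]
The standard identification of loops on Eilenberg-MacLane simplicial abelian groups yields $\Omega^k K(A^m,n) \cong K(A^m, n-k)$ when $0 \leq k \leq n$, and the trivial simplicial group when $k > n$. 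Assembling these identifications cosimplicially produces an isomorphism of cosimplicial abelian groups $\Omega^k K(A,n) \cong K(A, n-k)$ for $k \leq n$, and sectionwise contractibility of $\Omega^k K(A,n)$ for $k > n$.

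Combining these observations with Lemma \ref{lem 19} gives
\[
\pi_k \mathbf{hom}(\Delta, K(A,n)) \cong \pi_0 \mathbf{hom}(\Delta, K(A, n-k)) \cong H^{n-k}(A)
\]
for $0 \leq k \leq n$, and the group vanishes for $k > n$ since $\mathbf{hom}(\Delta, -)$ of a sectionwise contractible object is contractible. The only step requiring any real verification is that $\mathbf{hom}(\Delta, -)$ genuinely commutes with the sectionwise loop functor, tracking basepoints properly; this is the main (and essentially formal) point of the argument, and it follows immediately from the description of $\Omega$ as a fibre of a path-space fibration together with the fact that $\mathbf{hom}(\Delta,-)$ is a limit.
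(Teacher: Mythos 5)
Your argument is correct: the paper states Corollary \ref{cor 20} without proof, as an immediate consequence of Lemma \ref{lem 19} via exactly the degree-shift reduction you carry out, and your verification that $\mathbf{hom}(\Delta,-)$ commutes with the sectionwise (strict) loop functor is the right formal point to check. The only cosmetic quibble is that $\Omega^{k}K(A,n)\cong K(A,n-k)$ is more honestly a natural sectionwise weak equivalence of cosimplicial simplicial abelian groups than an isomorphism, but that suffices, since both sides are Bousfield--Kan fibrant by Corollary \ref{cor 16} and $\Delta$ is Bousfield--Kan cofibrant, so $\mathbf{hom}(\Delta,-)$ carries this weak equivalence to a weak equivalence.
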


Suppose that $i: A \to J^{\ast}$ is an injective resolution of $A$ in the
category of cosimplicial abelian groups, thought of as a morphism of
unbounded chain complexes with $A$ concentrated in chain degree $0$. Then
there is a induced weak equivalence of cosimplicial chain complexes
\begin{equation*}
i: A[-n] \to \Tr(J^{\ast}[-n])
\end{equation*}
where $\Tr$ is the good truncation functor in degree $0$ (which
preserves homology isomorphisms). Write $K(J,n)$ for the cosimplicial
object in simplicial abelian groups which is given by applying the
Dold-Kan correspondence to $\Tr(J^{\ast}[-n])$. Then there is an
induced weak equivalence
\begin{equation*}
i: K(A,n) \to K(J,n).
\end{equation*}

The weak equivalences $i$ and $\Delta \to \ast$ induce isomorphisms
\begin{equation*}
\pi_{ch}(N\mathbb{Z}\Delta,A[-n]) \cong
\pi_{ch}(N\mathbb{Z}\Delta,\Tr(J^{\ast}[-n])) \cong
\pi_{ch}(N\mathbb{Z}\ast,\Tr(J^{\ast}[-n])).
\end{equation*}
The first of these isomorphisms is a consequence of 
Corollary \ref{cor 17}.  

For the latter, a cosimplicial space $X$ defines a
bicomplex $\hom(X_{n},J^{p})$ with associated spectral sequence
\begin{equation}\label{eq 7}
E_{2}^{p,q} = \Ext^{q}(H_{p}X,A) \Rightarrow \pi_{ch}(\mathbb{Z}X,J^{\ast}[-p-q]) = \pi_{ch}(\mathbb{Z}X,\Tr(J^{\ast}[-p-q])).
\end{equation}
It follows that any weak equivalence $X \to Y$ of cosimplicial spaces induces an isomorphism
\begin{equation*}
\pi_{ch}(N\mathbb{Z}Y,\Tr(J^{\ast}[-n])) \xrightarrow{\cong}
\pi_{ch}(N\mathbb{Z}X,\Tr(J^{\ast}[-n]))
\end{equation*}
for all $n$. 

\begin{remark}
The ideas of the last few paragraphs, leading to the ``universal
coefficients'' spectral sequence (\ref{eq 7}) and the displayed
application, are again essentially sheaf theoretic. The spectral
sequence (\ref{eq 7}) is a special case of a spectral sequence which
relates homology sheaves to cohomology groups for simplicial sheaves
and presheaves. Many of these ideas originated in \cite{J5}, and the
theory is discussed in some detail in \cite{LocHom}.
\end{remark}

We have, finally, an isomorphism
\begin{equation*}
\pi_{ch}(N\mathbb{Z}\ast,\Tr(J^{\ast}[-n])) \cong H^{-n}(\varprojlim\ J^{\ast}) = \mathbf{R}\varprojlim{}^{n}(A),
\end{equation*}
where $\mathbf{R}\varprojlim{}^{n}(A)$ is the $n^{th}$ derived
functor of the inverse limit functor on cosimplicial abelian groups.

Lemma \ref{lem 19} therefore implies the following:

\begin{lemma}\label{lem 22}
There is an isomorphism
\begin{equation*}
H^{n}(A) \cong \mathbf{R}\varprojlim{}^{n}(A)
\end{equation*}
which is natural in cosimplicial abelian groups $A$.
\end{lemma}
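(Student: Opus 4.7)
The proof is essentially a matter of threading together the chain of natural isomorphisms that the preceding paragraphs have already prepared, so the plan is simply to identify each link and verify naturality.

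First I would apply Lemma~\ref{lem 19} together with Corollary~\ref{cor 17} to identify $H^{n}(A)$ with $\pi_{0}\mathbf{hom}(\Delta,K(A,n))$ and then with $\pi_{ch}(N\mathbb{Z}\Delta,A[-n])$. This converts the question about the cohomology of the cosimplicial abelian group $A$ into a question about chain homotopy classes of maps into a shift of $A$ in the category of cosimplicial chain complexes.

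Next I would pass along the injective resolution $i: A \to J^{\ast}$: since $i$ induces a weak equivalence $A[-n] \to \Tr(J^{\ast}[-n])$ of cosimplicial chain complexes, and since Corollary~\ref{cor 17} (via the Dold--Kan correspondence and the path object used in the proof of that corollary) tells us that chain homotopy classes out of $N\mathbb{Z}\Delta$ are invariant under weak equivalence of the target, we obtain
\begin{equation*}
\pi_{ch}(N\mathbb{Z}\Delta,A[-n]) \cong \pi_{ch}(N\mathbb{Z}\Delta,\Tr(J^{\ast}[-n])).
\end{equation*}
Then I would invoke the universal-coefficients spectral sequence (\ref{eq 7}) applied to the weak equivalence $\Delta \to \ast$ of cosimplicial spaces: the induced isomorphism on $E_{2}$-terms $\Ext^{q}(H_{p}\ast,A) \xrightarrow{\cong} \Ext^{q}(H_{p}\Delta,A)$ gives the third isomorphism
\begin{equation*}
\pi_{ch}(N\mathbb{Z}\Delta,\Tr(J^{\ast}[-n])) \cong \pi_{ch}(N\mathbb{Z}\ast,\Tr(J^{\ast}[-n])).
\end{equation*}

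Finally, chain homotopy classes of maps from $N\mathbb{Z}\ast = \mathbb{Z}$ (concentrated in degree $0$) into $\Tr(J^{\ast}[-n])$ compute $H^{-n}$ of the complex of global sections $\varprojlim J^{\ast}$, and because $J^{\ast}$ is an injective resolution of $A$ in cosimplicial abelian groups this is by definition $\mathbf{R}\varprojlim^{n}(A)$. Composing the four isomorphisms yields the desired natural identification $H^{n}(A) \cong \mathbf{R}\varprojlim^{n}(A)$.

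The only mildly delicate point, and the one I would take care over, is naturality in $A$: each of the four isomorphisms is natural in $A$ (the first two manifestly so, and the third is independent of $A$), while for the passage through $J^{\ast}$ one needs to note that different injective resolutions give canonically isomorphic derived functor values, which is standard. No new calculation is required beyond what is already set up above.
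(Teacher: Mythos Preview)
Your proposal is correct and follows essentially the same route as the paper: the lemma is stated as a consequence of the chain of isomorphisms assembled in the paragraphs immediately preceding it, and you have accurately identified each link (Lemma~\ref{lem 19} and Corollary~\ref{cor 17} for the first step, Corollary~\ref{cor 17} for the passage to the injective resolution, the spectral sequence~(\ref{eq 7}) for replacing $\Delta$ by $\ast$, and the final identification with derived inverse limits). Your attention to naturality, particularly the standard independence of the choice of injective resolution, is appropriate and matches the paper's implicit claim.
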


A cosimplicial abelian group $A$ is the analog of a sheaf of abelian
groups in the present context, and it is a consequence of Lemma
\ref{lem 22} that the groups $H^{n}(A)$ are the corresponding sheaf
cohomology groups. Sheaf cohomology groups are defined to be the
higher derived functors of global sections, while ``global sections''
is a different name for the inverse limit functor.

\section{Postnikov towers}

Suppose that
\begin{equation*}
X_{0} \xleftarrow{p} X_{1} \xleftarrow{p} X_{2} \leftarrow \dots
\end{equation*}
is a tower of sectionwise fibrations of sectionwise fibrant cosimplicial
spaces, and let $X = \varprojlim_{n} X_{n}$. Take an injective fibrant
model
\begin{equation*}
GX_{0} \xleftarrow{q} GX_{1} \xleftarrow{q} GX_{2} \leftarrow \dots
\end{equation*}
of the original tower by first taking an injective fibrant model $j: X_{0}
\to GX_{0}$ ($j$ a trivial cofibration, $GX_{0}$ injective fibrant),
and then inductively form diagrams
\begin{equation*}
\xymatrix{
X_{n+1} \ar[r]^{j} \ar[d]_{p} & GX_{n+1} \ar[d]^{q} \\
X_{n} \ar[r]_{j} & GX_{n}
}
\end{equation*}
such that all maps $j$ are trivial cofibrations and all $q$ are
injective fibrations. Then the diagram of simplicial set maps
\begin{equation*}
\xymatrix{
X^{m}_{n+1} \ar[r]^{j} \ar[d]_{p} & GX^{m}_{n+1} \ar[d]^{q} \\
X^{m}_{n} \ar[r]_{j} & GX^{m}_{n}
}
\end{equation*}
in each cosimplicial degree consists of trivial cofibrations $j$ and
Kan fibrations $p$ and $q$. The maps $j$ form a weak equivalence of
injective fibrant towers in simplicial sets, so that the maps
\begin{equation*}
X^{m} = \varprojlim_{n} X_{n}^{m} \xrightarrow{j_{\ast}}
\varprojlim_{n} GX_{n}^{m}
\end{equation*}
are weak equivalences for all $m$.
It follows that the map
\begin{equation*}
X = \varprojlim_{n} X_{n} \xrightarrow{j_{\ast}}
\varprojlim_{n} GX_{n}
\end{equation*}
is a weak equivalence of cosimplicial spaces. The object $\varprojlim_{n}
GX_{n}$ is injective fibrant, so that $j_{\ast}$ is an injective
fibrant model of $X$.
\medskip

Suppose that $X$ is a cosimplicial Kan complex, and form the Postnikov tower
\begin{equation*}
P_{1}X \xleftarrow{p} P_{2}X \xleftarrow{p} P_{3}X \xleftarrow{p} \dots
\end{equation*}
by making a sectionwise construction. Then the canonical map $X \to
\varprojlim_{n} P_{n}X$ is a weak equivalence, and the composite
\begin{equation*}
X \xrightarrow{\simeq} \varprojlim_{n} P_{n}X \xrightarrow{j_{\ast}} \varprojlim_{n} GP_{n}X
\end{equation*}
is an injective fibrant model of $X$. Write
\begin{equation*}
GX = \varprojlim_{n} GP_{n}X.
\end{equation*}

Suppose, more generally, that $I$ is a small category. It is well
known (see also Remark \ref{rem 4}) that the category of $I$-diagrams
$X: I \to s\mathbf{Set}$, with natural transformations, has an
injective model structure for which the weak equivalences and
cofibrations are defined sectionwise, while the injective fibrations
are defined by a right lifting property. The injective model structure
on cosimplicial spaces is a special case of this object. Again, the
terminal object for the $I$-diagram category is denoted by $\ast$, and
we have cocycles
\begin{equation*}
\ast \xleftarrow{\simeq} U \to X
\end{equation*}
and a cocycle category $h(\ast,X)$ for an $I$-diagram $X$. It is
again a consequence of general results \cite{coc-cat} 
that there is a bijection
\begin{equation*}
\pi_{0}h(\ast,X) \cong [\ast,X]
\end{equation*}
relating path components in the cocycle category and morphisms in the
homotopy category for the injective model structure on $s\mathbf{Set}^{I}$.

We also have the following:

\begin{lemma}\label{lem 23}
Suppose that the $I$-diagram $F$ is a diagram of Eilenberg-Mac Lane
spaces in the sense that there are weak equivalences
$F(i) \simeq K(B(i),n)$ for all objects $i$ of $I$, and for some fixed $n \geq
2$.  Suppose that $F$ has a cocycle 
\begin{equation*}
\ast \xleftarrow{\simeq} U \to F
\end{equation*}
in $I$-diagrams. Then $F$ is weakly equivalent to the
$I$-diagram $K(H_{n}F,n)$.
\end{lemma}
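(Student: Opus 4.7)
The plan is to use the cocycle to functorially point $F$ and then apply sectionwise Hurewicz/Postnikov theory naturally in $I$.

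First, replacing $F$ by a sectionwise fibrant model if needed and using the injective model structure on $I$-diagrams, factor the map $U \to F$ as a cofibration $U \hookrightarrow F'$ followed by a trivial fibration $F' \xrightarrow{\simeq} F$. Form the cofiber $\bar F' := F' \cup_U \ast$, which is an $I$-diagram of simplicial sets pointed sectionwise by the collapsed image of $U$. Since each $U(i)$ is contractible and $U(i) \hookrightarrow F'(i)$ is a cofibration of Kan complexes, the quotient map $F'(i) \to \bar F'(i)$ is a weak equivalence; hence each $\bar F'(i)$ is weakly equivalent to $K(B(i), n)$ and, in particular, $(n{-}1)$-connected.

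Next, since $n \geq 2$, the Hurewicz theorem identifies $\pi_n \bar F'(i) \cong H_n \bar F'(i)$ naturally, and the fundamental class provides a natural map of pointed Kan complexes $\bar F'(i) \to K(H_n \bar F'(i), n)$ which is a weak equivalence because both sides are Eilenberg--Mac Lane of the same type. Making this construction functorial in $I$---for instance by passing to simplicial abelian groups $\mathbb{Z}\bar F'$, taking the associated cosimplicial chain complex via the Dold--Kan correspondence, and applying a natural truncation---yields a zigzag of natural weak equivalences $\bar F' \simeq K(H_n \bar F', n)$ of $I$-diagrams. Finally, the cofiber sequence $U \to F' \to \bar F'$, combined with the contractibility of $U$, furnishes a natural isomorphism $H_n F \cong H_n \bar F'$ of functors $I \to \mathbf{Ab}$, giving $F \simeq \bar F' \simeq K(H_n F, n)$.

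The principal obstacle is the second step: producing an honest map (or zigzag of maps) of $I$-diagrams $\bar F' \to K(H_n \bar F', n)$, not merely a sectionwise weak equivalence. A direct sectionwise application of the Hurewicz map depends on choices, so genuine naturality requires a functorial construction --- through chain complexes and Dold--Kan, or through a cocycle-theoretic representation of maps in the homotopy category as in the earlier sections of the paper.
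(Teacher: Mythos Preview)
Your approach is essentially the paper's: factor the cocycle map as a cofibration followed by a trivial fibration, pass to simplicial abelian groups via the Hurewicz map, kill the contribution of $U$, and apply a Postnikov truncation to land in a $K(H_n,n)$. The paper performs the quotient at the abelian-group level rather than the space level, writing the zigzag as
\[
F \xleftarrow{\ p\ } V \xrightarrow{\ h\ } \mathbb{Z}V \to \mathbb{Z}V/\mathbb{Z}U \to P_{n}(\mathbb{Z}V/\mathbb{Z}U) \xleftarrow{\ \simeq\ } K(H_{n},n),
\]
which in one stroke removes the $H_{0}=\mathbb{Z}$ that your $\mathbb{Z}\bar F'$ still carries and that you would otherwise have to strip off by using the reduced free abelian group or an additional lower truncation.
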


\begin{proof}
Take a factorization
\begin{equation*}
\xymatrix{
U \ar[r]^{i} \ar[dr] & V \ar[d]^{p} \\
& F
}
\end{equation*}
where $i$ is a cofibration and $p$ is a trivial injective
fibration. 
There are maps
\begin{equation*}
F \xleftarrow{p} V \xrightarrow{h} \mathbb{Z}V \to 
\mathbb{Z}V/\mathbb{Z}U \to P_{n}(\mathbb{Z}V/\mathbb{Z}U) 
\xleftarrow{\simeq} K(H_{n},n)
\end{equation*}
where $h$ is the Hurewicz map and $P_{n}$ is the $n^{th}$ Postnikov
section functor in simplicial abelian groups. The composite
\begin{equation*}
V \xrightarrow{h} \mathbb{Z}V \to \mathbb{Z}V/\mathbb{Z}U \to P_{n}(\mathbb{Z}V/\mathbb{Z}U)
\end{equation*}
is a sectionwise weak equivalence by the Hurewicz theorem. The
$I$-diagram $H_{n}$ in abelian groups can be identified with the
integral homology $H_{n}F$ of the diagram $F$.
\end{proof}

\begin{corollary} 
Suppose that $F$ is a diagram of cosimplicial Eilenberg-Mac Lane
spaces in the sense of Lemma \ref{lem 23}. Then $F$ is weakly
equivalent in the $I$-diagram category to $K(A,n)$ for some
cosimplicial abelian group $A$ if and only if $F$ has a cocycle.
\end{corollary}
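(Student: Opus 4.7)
The plan is to prove the two implications separately, with the backward direction being essentially a direct specialization of Lemma \ref{lem 23} and the forward direction being a consequence of the fact that $K(A,n)$ always admits a canonical global basepoint together with homotopy-invariance of cocycle categories.

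For the backward direction, I would specialize Lemma \ref{lem 23} to the case $I = \mathbf{\Delta}$. If the cosimplicial diagram $F$ of Eilenberg-Mac Lane spaces admits a cocycle $\ast \xleftarrow{\simeq} U \to F$, then Lemma \ref{lem 23} produces a weak equivalence $F \simeq K(H_{n}F,n)$ in the $\mathbf{\Delta}$-diagram category. A $\mathbf{\Delta}$-diagram of abelian groups is by definition a cosimplicial abelian group, so setting $A := H_{n}F$ completes this half of the argument.

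For the forward direction, suppose $F$ is weakly equivalent to $K(A,n)$ for some cosimplicial abelian group $A$. Then $K(A,n)$ carries a canonical global basepoint, namely the zero section $\ast \to K(A,n)$; this produces the trivial cocycle
\begin{equation*}
\ast \xleftarrow{=} \ast \xrightarrow{0} K(A,n),
\end{equation*}
so the cocycle category $h(\ast,K(A,n))$ is non-empty. By the general cocycle principle for the injective model structure on $I$-diagrams recalled just before Lemma \ref{lem 23}, one has a bijection $\pi_{0}h(\ast,-) \cong [\ast,-]$, and so path components of cocycle categories are preserved by weak equivalences. Consequently $h(\ast,F)$ is non-empty, i.e.\ $F$ admits a cocycle $\ast \xleftarrow{\simeq} U \to F$.

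There is no real obstacle here: the corollary is essentially a reformulation of Lemma \ref{lem 23}, the only content beyond that lemma being the easy observation that cosimplicial Eilenberg-Mac Lane spaces $K(A,n)$ always have global basepoints and that the existence of a cocycle is a homotopy invariant. The one small point worth stating explicitly is the identification of the abstract $I$-diagram $H_{n}F$ of abelian groups produced by Lemma \ref{lem 23} with a cosimplicial abelian group in the case $I = \mathbf{\Delta}$.
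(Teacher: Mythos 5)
Your proposal is correct and follows essentially the same route as the paper: the backward direction is exactly Lemma \ref{lem 23} specialized to $I=\mathbf{\Delta}$ (so that $A=H_{n}F$ is a cosimplicial abelian group), and the forward direction uses the zero section $\ast \to K(A,n)$ together with the homotopy invariance of having a cocycle via $\pi_{0}h(\ast,-)\cong[\ast,-]$. Your version merely spells out these two small points a bit more explicitly than the paper does.
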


\begin{proof}
The object $K(A,n)$ has a global base point $\ast \to K(A,n)$ (and
hence a cocycle) which is defined by the element $0$. Thus, if $F$ is weakly
equivalent to $K(A,n)$ then $F$ has a cocycle. The converse assertion
is proved in Lemma \ref{lem 23}.
\end{proof}

Suppose that the cosimplicial space $X$ has a cocycle $U \to
X$, or equivalently that there is a global point $\ast \to GX$ for $GX$.
Let $x: \ast \to GX$ be a choice of base point, and write $x: \ast \to
GP_{n}X$ for its images in the objects $GP_{n}X$. Define the
cosimplicial space $F_{n}$ by the pullback diagram
\begin{equation*}
\xymatrix{
F_{n} \ar[r] \ar[d] & GP_{n}X \ar[d]^{q} \\
\ast \ar[r]_-{x} & GP_{n-1}X
}
\end{equation*}
for $n \geq 2$.
Then $F_{n}$ is injective fibrant, and there is a pullback
\begin{equation*}
\xymatrix{
\mathbf{hom}(\ast,F_{n}) \ar[r] \ar[d] 
& \mathbf{hom}(\ast,GP_{n}X) \ar[d]^{q_{\ast}} \\
\ast \ar[r]_-{x} & \mathbf{hom}(\ast,GP_{n-1}X)
}
\end{equation*}
The space $\mathbf{hom}(\ast,F_{n})$ is non-empty, and it follows from
Lemma \ref{lem 23} that there is a weak equivalence
\begin{equation*}
F_{n} \xrightarrow{\simeq} K(\pi_{n}(GX,x),n).
\end{equation*}

We know how to compute the homotopy groups of the space
$\mathbf{hom}(\ast,F_{n})$ on account of Corollary \ref{cor 20} in the
presence of a cocycle for $X$.  The spectral sequence for the tower of
fibrations
\begin{equation*}
\mathbf{hom}(\ast,GP_{1}X) \xleftarrow{q_{\ast}} \mathbf{hom}(\ast,GP_{2}X) \xleftarrow{q_{\ast}} \mathbf{hom}(\ast,GP_{3}X) \leftarrow \dots 
\end{equation*}
is a special case of the descent spectral sequence for a simplicial
presheaf, with $E_{1}$-terms given by sheaf cohomology groups
\cite{J7}. Thomason's reindexing trick \cite[5.54]{AKTEC} converts
these $E_{1}$-terms to $E_{2}$-terms of the form appearing in the
Bousfield-Kan spectral sequence for the tower of fibrations $\{
\Tot_{s}(GX) \}$ --- see also \cite{GECT}.

\begin{remark}\label{rem 25}
Suppose that $Y$ is a simply connected Kan complex, with Postnikov
tower $\{ P_{n}Y \}$. Form the natural cofibre sequence
\begin{equation*}
P_{n}Y \to P_{n-1}Y \to P_{n-1}Y/P_{n}Y.
\end{equation*}
By this, we mean that we take a functorial replacement of the
fibration $p: P_{n}Y \to P_{n-1}Y$ by a cofibration $P_{n}Y \to A_{n-1}Y$
and then we write $P_{n-1}Y/P_{n}Y$ for the natural fibrant
replacement of the quotient $A_{n-1}Y/P_{n}Y$. Then let
\begin{equation*}
P_{n-1}Y/P_{n}Y \to P_{n+1}(P_{n}Y/P_{n-1}Y) 
\end{equation*}
be the usual fibration
  to the $(n+1)^{st}$ Postnikov section of the homotopy cofibre of $p$.
Then the composite
\begin{equation*}
P_{n-1}Y \to P_{n-1}Y/P_{n}Y \to P_{n+1}(P_{n-1}Y/P_{n}Y)
\end{equation*}
is the $k$-invariant $k_{q}$ of the fibration $q$, and there is a natural
homotopy fibre sequence
\begin{equation}\label{eq 8}
P_{n}Y \xrightarrow{q} P_{n-1}Y \xrightarrow{k_{q}} P_{n+1}(P_{n-1}Y/P_{n}Y)
\end{equation}
which identifies $P_{n}Y$ with the homotopy fibre of $k_{q}$ over the
base point of the diagram $P_{n+1}(P_{n-1}Y/P_{n}Y)$ which is defined by the image
of $P_{n}Y$.

The fibre sequence (\ref{eq 8}) is functorial in simply connected Kan
complexes $Y$.
\end{remark}

Suppose that $H$ is a cosimplicial groupoid. For the discussion of
weak equivalences that appears below, an $H$-diagram in simplicial
sets is best viewed as a functor
$Y:E_{\mathbf{\Delta}} H \to s\mathbf{Set}$ which is defined on the
Grothendieck construction for $H$.

There are functors
$H^{n} \to E_{\mathbf{\Delta}} H$ which are defined by $x \mapsto (\mathbf{n},x)$,
and the functors $Y^{n}$ associated to the functor $Y$ are the
composites
\begin{equation*}
H^{n} \to E_{\mathbf{\Delta}} H \xrightarrow{Y} \mathbf{Set}.
\end{equation*}
The spaces $\hocolim_{H^{n}} Y^{n}$ define a cosimplicial space
$\hocolim_{H} Y$ and a canonical map $\hocolim_{H}Y \to BH$. In this
way we define a functor
\begin{equation*}
\hocolim_{H}: s\mathbf{Set}^{E_{\mathbf{\Delta}} H} \to s\mathbf{Set}^{\mathbf{\Delta}}/BH
\end{equation*}
from the category of $E_{\mathbf{\Delta}}H$ diagrams in simplicial
sets to the category of cosimplicial spaces $Z \to BH$ over $BH$.
Conversely, starting with a cosimplicial space map $X \to BH$, we form
the pullbacks
\begin{equation*}
\xymatrix{
\pb(X)_{x} \ar[r] \ar[d] & X^{n} \ar[d] \\
B(H^{n}/x) \ar[r] & BH^{n}
}
\end{equation*}
for each object $(\mathbf{n},x)$ of the Grothendieck construction
$E_{\mathbf{\Delta}} H$. Then the simplicial sets $\pb(X)_{x}$
define a functor $\pb(X): E_{\mathbf{\Delta}} H \to
s\mathbf{Set}$. The construction is plainly functorial in objects $X
\to BH$, and so we have a functor
\begin{equation*}
\pb: s\mathbf{Set}^{\mathbf{\Delta}}/BH \to s\mathbf{Set}^{E_{\mathbf{\Delta}} H}.
\end{equation*}
The pullback functor $\pb$ is left adjoint to the homotopy colimit
functor $\hocolim_{H}$ since $H$ is a cosimplicial groupoid.
The homotopy colimit functor preserves weak equivalences
by standard nonsense, while the pullback functor preserves weak
equivalences by a Quillen Theorem B argument.  The pullback functor
also preserves cofibrations, so the pullback and homotopy colimit
functors form a Quillen adjunction. This adjunction is a Quillen
equivalence, since the counit map $\epsilon$ and unit map $\eta$ are
weak equivalences for all objects in the respective categories. See
also \cite[Lemma 18]{J39}.

The homotopy colimit functor $\hocolim_{H}$
preserves homotopy cartesian diagrams, since it preserves weak
equivalences and is the right adjoint part of a Quillen adjunction.

Suppose that $X$ is a cosimplicial Kan complex.
There are canonical maps
\begin{equation*}
\xymatrix@C=6pt{
P_{n}X \ar[rr]^{q} \ar[dr]  && P_{n-1}X \ar[dl] \\
& P_{1}X \ar[d]^{j}_{\simeq} \\
& BH
}
\end{equation*}
for $n \geq 2$, where $H$ is a stack completion of the fundamental groupoid 
\begin{equation*}
\pi(P_{1}X) \cong \pi(X) 
\end{equation*}
of $X$. Let $j: \pi(X) \to H$ also denote the map induced by $j:
P_{1}X \to BH$.

Take $x \in H^{m}$. There is a morphism $\alpha: x \to j(y)$ for some
$y \in \pi(X^{m})$, and so there is a weak equivalence
\begin{equation}\label{eq 9}
\pb(P_{n}(X))_{x} \simeq \pb(P_{n}(X))_{y}
\end{equation}
where the latter space is determined by the pullback diagram
\begin{equation*}
\xymatrix{
\pb(P_{n}X)_{y} \ar[r] \ar[d] & P_{n}X^{m} \ar[d] \\
B(\pi(X^{m})/y) \ar[r] & B\pi(X^{m})
}
\end{equation*}
The weak equivalence (\ref{eq 9}) is a special case of a weak equivalence
\begin{equation*}
\pb(Z)_{x} \simeq \pb(Z)_{y}
\end{equation*}
which is defined for and natural in objects $Z \to B\pi(X)$. It follows 
that the map 
\begin{equation*}
q_{\ast}: \pb(P_{n}X)_{x} \to
\pb(P_{n-1}X)_{x}
\end{equation*} 
is weakly equivalent to the map 
\begin{equation}\label{eq 10}
q_{\ast}:
\pb(P_{n}X)_{y} \to \pb(P_{n-1}X)_{y}.
\end{equation}

 The pullback $\pb(P_{n}X)_{y}$ is naturally weakly equivalent to the
 Postnikov section $P_{n}\pb(X)_{y}$ of the universal cover
 $\pb(X)_{y}$ of $X^{m}$, and the induced fibrations (\ref{eq 10}) are weakly
 equivalent to the fibrations
\begin{equation*}
q: P_{n}\pb(X)_{y} \to P_{n-1}\pb(X)_{y}.
\end{equation*}
It therefore follows from Remark
 \ref{rem 25} that there are homotopy fibre sequences of diagrams
\begin{equation*}
\pb(P_{n}X) \xrightarrow{q_{\ast}} \pb(P_{n-1}X) \xrightarrow{k_{q}} 
P_{n+1}(\pb (P_{n-1}X)/\pb (P_{n}X)) =: Z_{n}X
\end{equation*}
over $E_{\mathbf{\Delta}} H$. The image of $\pb(P_{n}X)$ in
$Z_{n}X$ defines a global base point, and so
Lemma \ref{lem 23} implies that $Z_{n}X$ is weakly equivalent to
$K(H_{n+1}(Z_{n}X),n+1)$ in the $E_{\mathbf{\Delta}} H$-diagram
category. 

There is an isomorphism of groups 
\begin{equation*}
H_{n+1}(Z_{n}X)(\mathbf{m},y) \cong \pi_{n}(X^{m},y)
\end{equation*}
for each $y \in X^{m}$. For more general $x \in H^{m}$ there
is a non-canonical isomorphism
\begin{equation*}
H_{n+1}(Z_{n}X)(\mathbf{m},x) \cong \pi_{n}(X^{m},y)
\end{equation*}
which induced by a morphism $x \to j(y)$ of $H^{m}$.

Taking homotopy colimits preserves homotopy cartesian squares, and we
have proved

\begin{theorem}\label{th 26}
Suppose that $X$ is a cosimplicial Kan complex and suppose that $n
\geq 2$. Then there is a homotopy cartesian diagram
\begin{equation}\label{eq 11}
\xymatrix{
P_{n}X \ar[r] \ar[d]_{q} & B\St(\pi(X)) \ar[d] \\
P_{n-1}X \ar[r]_-{k_{q\ast}} & \hocolim_{H} K(H_{n}(Z_{n}X),n+1)
}
\end{equation}
in cosimplicial spaces,
where $Z_{n}X = P_{n+1}(\pb(P_{n-1}X)/\pb(P_{n}X))$ as a diagram over
the Grothendieck construction $E_{\mathbf{\Delta}} H$ of
the stack completion $H$ of the fundamental groupoid $\pi(X)$.
\end{theorem}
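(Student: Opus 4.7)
The plan is to obtain the square by applying $\hocolim_H$ to the homotopy fibre sequence of $E_{\mathbf{\Delta}}H$-diagrams that is already in place just before the theorem statement. Recall that the author has constructed
\begin{equation*}
\pb(P_{n}X) \xrightarrow{q_{\ast}} \pb(P_{n-1}X) \xrightarrow{k_{q}} Z_{n}X
\end{equation*}
over $E_{\mathbf{\Delta}}H$, together with a canonical global base point $\pb(P_{n}X) \to Z_{n}X$ coming from the image of $\pb(P_{n}X)$. This fibre sequence is equivalent to the assertion that the square
\begin{equation*}
\xymatrix{
\pb(P_{n}X) \ar[r] \ar[d]_{q_{\ast}} & \ast \ar[d] \\
\pb(P_{n-1}X) \ar[r]_-{k_{q}} & Z_{n}X
}
\end{equation*}
is homotopy cartesian in $s\mathbf{Set}^{E_{\mathbf{\Delta}}H}$.

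Next I would apply Lemma \ref{lem 23} to $Z_{n}X$, using its global base point, to replace $Z_{n}X$ up to weak equivalence in $s\mathbf{Set}^{E_{\mathbf{\Delta}}H}$ by $K(H_{n+1}(Z_{n}X),n+1)$; by the identification of $H_{n+1}(Z_{n}X)$ with the homotopy groups $\pi_{n}(X^{m},y)$ discussed just above the theorem, this is the Eilenberg--MacLane diagram that appears in the bottom right corner of the target square. Then I would apply the homotopy colimit functor $\hocolim_{H}$ to the above square. As the excerpt notes, $\hocolim_{H}$ is the right adjoint half of a Quillen equivalence
\begin{equation*}
\pb : s\mathbf{Set}^{\mathbf{\Delta}}/BH \rightleftarrows s\mathbf{Set}^{E_{\mathbf{\Delta}}H} : \hocolim_{H},
\end{equation*}
it preserves weak equivalences, and therefore it preserves homotopy cartesian squares.

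It remains to identify each of the four corners of the resulting square. For the two left-hand corners, the counit $\hocolim_{H}\pb(Z) \to Z$ is a weak equivalence for any $Z \to BH$, so $\hocolim_{H}\pb(P_{n}X) \simeq P_{n}X$ and $\hocolim_{H}\pb(P_{n-1}X) \simeq P_{n-1}X$, with the induced horizontal map being $q$. The top right corner becomes $\hocolim_{H}\ast = BH = B\St(\pi(X))$ because $H$ is by hypothesis the stack completion of $\pi(X)$. The bottom right corner becomes $\hocolim_{H} K(H_{n}(Z_{n}X),n+1)$ after relabelling (as in the statement; the weak equivalence from Lemma \ref{lem 23} pushes forward along $\hocolim_{H}$). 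The bottom horizontal arrow is, by construction, the pushforward $k_{q\ast}$ of the $k$-invariant. The only subtle point I expect is bookkeeping: one must verify that the weak equivalences coming from the counit of the Quillen equivalence and from Lemma \ref{lem 23} are natural enough to commute with the maps $q_{\ast}$ and $k_{q}$ in the square; this is the main thing to check carefully, but it follows from the functoriality of the adjunction $(\pb,\hocolim_{H})$ and the naturality built into the construction of $Z_{n}X$ in Remark \ref{rem 25}.
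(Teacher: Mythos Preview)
Your proposal is correct and follows essentially the same route as the paper: the paper's proof is precisely the discussion preceding the theorem, culminating in the line ``Taking homotopy colimits preserves homotopy cartesian squares, and we have proved'', which is exactly your step of applying $\hocolim_{H}$ to the homotopy cartesian square of $E_{\mathbf{\Delta}}H$-diagrams and then identifying the corners via the counit of the Quillen equivalence and Lemma~\ref{lem 23}. One trivial slip: the map $q$ you recover between the left-hand corners is the \emph{vertical} map in the square, not the horizontal one.
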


The $k$-invariant $k_{q}: \pb(P_{n-1}X) \to K(H_{n}(Z_{n}X),n+1)$
represents an element of the stack cohomology group 
\begin{equation*}
[\pb(P_{n-1}X),
  K(H_{n}(Z_{n}X),n+1)]
\end{equation*}
where stack cohomology is interpreted to mean abelian group cohomology
for diagrams over the category $E_{\mathbf{\Delta}} H$ --- see
\cite{J39}.
\medskip

The ideas displayed in this section admit substantial
generalization. One could, for example, start with an $I$-diagram $X$
of Kan complexes, and observe that its Postnikov tower is defined over
the $I$-diagram $\pi(X)$ of fundamental groupoids, as well over the
stack completion $H$, which is an injective fibrant model of
$\pi(X)$. Then one shows that the comparison $q_{\ast}: \pb(P_{n}X)
\to \pb(P_{n-1}X)$ of associated diagrams on the Grothendieck
construction $E_{I} H$ has the formal properties that we
saw in the proof of Theorem \ref{th 26}, so that the sequence
\begin{equation*}
\pb(P_{n}X) \xrightarrow{q_{\ast}} \pb(P_{n-1}X) \xrightarrow{k_{q}} 
P_{n+1}(\pb (P_{n-1}X)/\pb (P_{n}X)) =: Z_{n}X
\end{equation*}
is a homotopy fibre sequence of diagrams, and $Z_{n}X$ is a diagram of
Eilenberg-Mac Lane spaces having a global base point.  It follows
that there are homotopy cartesian diagrams of the form (\ref{eq 11})
for all such $I$-diagrams $X$.

\bibliographystyle{plain} 
\bibliography{spt}

\end{document}